\documentclass[10pt]{article}
\usepackage[T1]{fontenc}
\usepackage[ansinew]{inputenc}
\usepackage{amsthm,amsmath,amssymb}
\usepackage{amsfonts,amscd}
\usepackage{float,mathrsfs,bm,multirow}
\usepackage{graphicx}
\usepackage{wrapfig}
\usepackage{subfig}
\usepackage{enumerate}
\usepackage[percent]{overpic}
\usepackage[all]{xy}

\textwidth=15.5cm \textheight=23cm \topmargin=-1cm \oddsidemargin=0cm
\flushbottom

\renewcommand{\S}{S^1}
\newcommand{\R}{\mathbb R}

\newcommand{\llangle}{\langle\!\langle}
\newcommand{\rrangle}{\rangle\!\rangle}
\newcommand{\sect}{\text{\upshape sec}}
\newcommand{\proofbegin}{\noindent{\it Proof.\,\,}}
\newcommand{\proofend}{\hfill$\Box$\bigskip}


\allowdisplaybreaks

\newcommand{\Diff}{\text{\upshape Diff}}
\newcommand{\dS}{\mathcal{S}}
\newcommand{\id}{\text{\upshape id}}
\newcommand{\projK}{p}
\newcommand{\projV}{q}

\DeclareMathOperator{\arctanh}{arctanh}

\newtheorem{thm}{Theorem}[section]
\newtheorem{cor}{Corollary}[section]
\newtheorem{lem}{Lemma}[section]
\newtheorem{prop}{Proposition}[section]

\newtheorem{defn}{Definition}
\newtheorem{rem}{Remark}[section]

\def\theequation{\thesection.\arabic{equation}}
\newcommand{\nequation}{\setcounter{equation}{0}}

\begin{document}

\input epsf
\title{The Hunter-Saxton system and \\ the geodesics on a pseudosphere}

\author{ {\sc Jonatan Lenells}\footnote{Department of Mathematics, Baylor University, One Bear Place $\#$ 97328, Waco, TX 76798, USA} \qquad
{\sc Marcus Wunsch}\footnote{Departement Mathematik, ETH Z\"urich, R\"amistrasse 101, 8092 Zurich, Switzerland \vfill Email: \texttt{Jonatan\_Lenells@baylor.edu, Marcus.Wunsch@math.ethz.ch}}}
 
\date{}

\maketitle 

\begin{abstract}
\noindent 
We show that the two-component Hunter-Saxton system with negative coupling constant describes the geodesic flow on an infinite-dimensional pseudosphere. This approach yields explicit solution formulae for the Hunter-Saxton system. Using this geometric intuition, we conclude by constructing global weak solutions. The main novelty compared with similar previous studies is that the metric is indefinite.
\end{abstract}

{\it Keywords:} The Hunter-Saxton system; pseudosphere; geodesics; global weak solutions

{\it 2010 Mathematics Subject Classification:} 
53C50 
53C21 
53C22 
35B44 
35D30 


\section{Introduction}\nequation
In this paper, we are concerned with the following two-component Hunter-Saxton system subject to periodic boundary conditions: 
\begin{align} \label{2hs}
\begin{cases}
m_t + u m_x + 2 u_x m + \kappa \rho \rho_x = 0, \\ 
\rho_t + (u \rho)_x = 0, \\
 u(0,x) = u_0(x), \; \rho(0,x) = \rho_0(x), 
\end{cases}
 \qquad t > 0,\; x \in \S \simeq \R/\mathbb Z, 
\end{align}
where $m = -\partial_x^2 u$ and $\kappa = \pm 1$ is a parameter, see \cite{kohlmann, len_kahler, ll, liyin1,liyin2,wuwu,Wun09,Wun10,Wun11b}. 
The system (\ref{2hs}) generalizes the well-known Hunter-Saxton equation, $u_{txx} + u u_{xxx} + 2 u_x u_{xx} = 0$, modeling the propagation of nonlinear orientation waves in a massive nematic liquid crystal (cf. \cite{bc, bhr, h-s, km, lenells, tiglay, Y2004}), to which \eqref{2hs} reduces if $\rho_0$ is chosen to vanish identically. 

\par

In mathematical physics, the Hunter-Saxton system \eqref{2hs} is a special instance of the Gurevich-Zybin model describing the nonlinear dynamics of non-dissipative dark matter in one space dimension, as well as a model for nonlinear ion-acoustic waves (see \cite{pavlov1, pavlov} and the references therein). 
Additionally, it is the short wave (or high-frequency) limit of the two-component Camassa-Holm system originating in the Green-Naghdi equations which approximate the governing equations for water waves  \cite{CI2008, ELY07, guo, gz}. The Camassa-Holm system is obtained by setting $m = (1 - \partial_x^2)u$ and $\kappa = 1$ in \eqref{2hs}; the case $\kappa = -1$ corresponds to the situation in which the gravity acceleration points upwards \cite{CI2008}. 
Let us also mention that the Hunter-Saxton system is embedded in a wider class of coupled third-order systems encompassing the axisymmetric Euler flow with swirl \cite{HL2008} and a vorticity model equation \cite{clm, OSW2008} among others (cf. \cite{Wun11b} and the references therein). 

\par

Geometric aspects of \eqref{2hs} have recently been highlighted in \cite{EE}: If $\kappa = 1$, the Hunter-Saxton system can be realized as a geodesic equation of a Riemannian connection on the semi-direct product of a subgroup of the group of orientation-preserving circle diffeomorphisms with the space of smooth functions on the circle. The arguably most prominent geodesic equations are the Euler equations of hydrodynamics \cite{A1966, ebi-mar,arkh} governing the geodesic flow on the Lie group of volume-preserving diffeomorphisms; others include the Camassa-Holm equation \cite{misiolek,kouranbaeva, CK2002, ck:geod}, the Degasperis-Procesi equation \cite{deg-proc,EK09}, their two-component generalizations \cite{ekl10}, and the CLM vorticity model equation \cite{ekw,EW}. 

\par

In \cite{Wun11b}, the second author constructed global weak solutions to the periodic two-component Hunter-Saxton system \eqref{2hs} in the case $\kappa = 1$. These both spatially and temporally periodic solutions are conservative in the sense that the energy $\|u_x(t,.)\|_{L_2(\S)}^2 + \| \rho(t,.) \|_{L_2(\S)}^2$ is constant for almost all times. This construction was given a geometric rationale in \cite{len_kahler}, where it was explained that the possibility of extending geodesics beyond their breaking points is due to an isometry between the underlying space and (a subset of) a unit sphere. 
Using completely different methods, the authors of \cite{liyin2} proved that there are dissipative solutions to the more general $\mu$-Hunter-Saxton system on the real line ($\mu$ here denotes the mean value of the first component $u$), while it was shown in \cite{GY2011} that there are weak solutions of the Hunter-Saxton system on $\R$ when $\kappa = -1$. 

\subsection*{Outline of the paper}
In this paper, we analyze the system (\ref{2hs}) with $\kappa = -1$ in the periodic setting. 

In Section \ref{explicit}, we present explicit solution formulae for (\ref{2hs}) with $\kappa = -1$ using the method of characteristics. It turns out that some solutions exist globally, whereas others develop singularities in finite time.

In Section \ref{geometrysec}, we inquire into the geometry of the Hunter-Saxton system. We show that (\ref{2hs}) with $\kappa = -1$ is the geodesic equation on an infinite-dimensional Lie group $G^s$ equipped with a right-invariant {\it pseudo}-Riemannian metric (i.e. a metric which is nondegenerate, but not positive definite). In fact, we show that $G^s$ is isometric to a subset of an infinite-dimensional pseudosphere. This is the first example known to the authors where a PDE of this type arises as the geodesic equation on a manifold with an indefinite metric.
Since the geodesics on a pseudosphere can easily be written down explicitly, this yields an alternative derivation of the explicit solution formulae of Section \ref{explicit}.  
We also consider the restriction of (\ref{2hs}) to solutions $(u, \rho)$, where $\rho$ has zero mean. Geometrically, this gives rise to the study of a quotient manifold $K^s = G^s /\R$ that admits a natural symplectic structure.
The geometric properties of (\ref{2hs}) can be summarized as follows:
\begin{center}
\begin{tabular}{c|c|c|c}
 & {\bf Type of metric} & {\bf Curvature} & {\bf Underlying geometry} \\
  \hline
 $\kappa = 1$ & Riemannian & constant and positive & spherical \\
	\hline
$\kappa = -1$ & pseudo-Riemannian & constant and positive & pseudospherical 
\end{tabular}
\end{center}

\noindent
In Section \ref{main}, we describe how global weak solutions can be constructed if $\| \rho_0 \|_{L_2(\S)}^2 > \|u_{0x}\|_{L_2(\S)}^2$.

\subsection*{Notation}
The Hilbert space of functions $f : \S \rightarrow \R$ which, together with their derivatives of order $s \ge 0$, are square-integrable, will be denoted by $H^s(\S)$. If $s = 0$, we use the notation $L_2(\S)$ instead of $H^0(\S)$. The subspace of functions $u \in H^s(\S)$ such that $u(0) = 0$ will be denoted by $H_0^s(\S)$. The subspace of functions $u \in H^s(\S)$ such that $\int_{S^1} u dx = 0$ will be denoted by $H_\R^s(\S)$. Lastly, the shorthand $\{ f > 0\}$ for the set $\{ x \in \S:\; f(x) > 0 \}$ (and other analogous short forms) will be used throughout the text. 

\pagebreak

\section{Explicit solution formulae} \label{explicit} \nequation
In this section, we provide new solution formulae for the Hunter-Saxton system (\ref{2hs}). 
\\ 
Let $\kappa = -1$. The first component equation of (\ref{2hs}) can be rewritten in terms of the gradient $u_x(t,x)$ as
\begin{equation} 
u_{tx} + u u_{xx} + \frac{1}{2} u_x^2 + \frac{1}{2} \rho^2 + 2c = 0,  
\end{equation} 
\noindent
where the nonlocal term $c = \frac{1}{4} \int_{\S} \left(u_x(t,x)^2 - \rho(t,x)^2\right)\;dx$ is enforced by periodicity. It follows from (\ref{2hs}) that $c$ is indepedent of $t$ (see \cite{wuwu, Wun09}). Three possibilities now arise: 
\begin{enumerate}[$(i)$]
\item $c > 0$; 
\item $c = 0$; 
\item $c < 0$. 
\end{enumerate}
Since the Hunter-Saxton system is invariant under the scalings 
$$u(t,x) \mapsto \alpha u(\alpha t,x), \quad \rho(t,x) \mapsto \alpha \rho(\alpha t,x), \qquad \alpha \in \R,$$ 
we may without loss of generality set $c = 1$ for case $(i)$, and $c = -1$ for case $(iii)$. 

Let us introduce the Lagrangian flow map $\varphi$ solving 
\begin{equation} \label{flow}
\varphi_t(t,x) = u(t,\varphi(t,x)),\quad \varphi(0,x) = x \in \S.
\end{equation}
\noindent
In terms of $U(t,x) := u_x(t,\varphi(t,x))$ and $\varrho(t,x) := \rho(t,\varphi(t,x))$, we can rephrase  \eqref{2hs} as
\begin{equation} \label{lagran}
\begin{cases}
U_t + \frac{1}{2} U^2 + \frac{1}{2} \varrho^2 + 2c = 0,	\\
\varrho_t + U \varrho = 0, \\
U(0,x) =u_{0x}(x),\; \varrho(0,x) = \rho_0(x). 
\end{cases}
\end{equation}
The sum $p := U + \varrho$ and the difference $q := U - \varrho$ both satisfy the following Riccati-type differential equation with corresponding initial data:  
\begin{equation}
z_t = - \frac{1}{2} z^2 - 2c, \quad z(0,x) = z_0(x) = 
\begin{cases} \label{ric}
p(0,x) = u_{0x}(x) + \rho_0(x); \\
q(0,x) = u_{0x}(x) - \rho_0(x).
\end{cases}
\end{equation}
Equation \eqref{ric} is explicitly solvable: 
\begin{align}
z(t,x) = 
\begin{cases} 
\dfrac{2 z_0(x) - 4\tan t}{z_0(x) \tan t + 2}  \qquad &\mbox{if } c = 1; 
\\[0.3cm]
\dfrac{2z_0(x)}{2 + z_0(x)\;t} \qquad &\mbox{if } c = 0; 
\\[0.3cm] 
2 \; \dfrac{z_0(x) - 2 + e^{2t}\;(2 + z_0(x))}{2 - z_0(x) + e^{2t}\; (2 + z_0(x))} \qquad &\mbox{if } c = -1. 
\end{cases}
\end{align}
Decomposition yields for the first component 
\begin{align*}
U(t,x) = 
\begin{cases}
\dfrac{4\;\cos(2t) \;u_{0x}(x) + \sin(2t)\; \left[ u_{0x}(x)^2 - \rho_0(x)^2 - 4 \right]}
{\left[ u_{0x}(x)\; \sin{t} + 2\;\cos{t}\right]^2 - \rho_0(x)^2\;\sin^2{t}} \quad &\mbox{if } c = 1; 
\\[0.4cm] 
\dfrac{4\; u_{0x}(x) + 2\; \left[ u_{0x}(x)^2 - \rho_0(x)^2 \right]\;t}{\left[2 + u_{0x}(x) \; t \right]^2 - \rho_0(x)^2\;t^2} 
&\mbox{if } c = 0; 
\\[0.4cm] 
\dfrac{4\; \cosh(2t)\;u_{0x}(x) + \sinh(2t)\;\left[ u_{0x}(x)^2 - \rho_0(x)^2 + 4 \right]}
{\left[2\; \cosh{t} + u_{0x}\;\sinh{t}\right]^2 - \rho_0(x)^2\;\sinh^2{t}} &\mbox{if } c = -1; 
\end{cases}
\end{align*}
and, for the second, 
\begin{align*}
\varrho(t,x) = 
\begin{cases}
\dfrac{4\rho_0(x)}{\left[ u_{0x}(x)\; \sin{t} + 2\;\cos{t}\right]^2 - \rho_0(x)^2\;\sin^2{t}} \quad &\mbox{if } c = 1; 
\\[0.4cm] 
\dfrac{4\;\rho_0(x)}{\left[ 2 + u_{0x}(x)\;t\right]^2 - \rho_0(x)^2\;t^2} &\mbox{if } c = 0; 
\\[0.4cm] 
\dfrac{4\;\rho_0(x)}{\left[2\; \cosh{t} + u_{0x}\;\sinh{t}\right]^2 - \rho_0(x)^2\;\sinh^2{t}} &\mbox{if } c = -1.
\end{cases}
\end{align*} 
These solutions do not exist beyond a critical time $T^* > 0$ given by 
\begin{align} \label{singularity}
T^* = 
\begin{cases} 
\dfrac{\pi}{2} + \min\left\{ \arctan\left[ \min_{\S} \dfrac{u_{0x} - \rho_0}{2} \right], \arctan\left[  \min_{\S} \dfrac{u_{0x} + \rho_0}{2} \right] \right\} \; &\mbox{if } c = 1; 
\\[0.4cm]
\min\left\{ \inf_{\{ u_{0x} < \rho_0 \}} \left\{ \frac{-2}{u_{0x} - \rho_0} \right\}, \inf_{\{ u_{0x} + \rho_0 < 0 \}} \left\{ \frac{-2}{u_{0x} + \rho_0} \right\}\right\} &\mbox{if } c = 0; 
\\[0.4cm] 
\min \left\{ {\rm arccoth}\left[\inf_{\{ \rho_0 - u_{0x} > 2\}} \dfrac{\rho_0 -u_{0x}}{2}\right], 
{\rm arccoth} \left[ \inf_{\{u_{0x} +\rho_0 < - 2 \}} \dfrac{-u_{0x} - \rho_0}{2} \right] \right\}
&\mbox{if } c = -1. 
\end{cases}
\end{align}

\noindent
We have thus proven the following proposition. 
\begin{prop} \label{expli}
Let $s \geq 2$. Suppose $(u_0, \rho_0) \in H_0^s(\S) \times H^{s - 1}(\S)$, and denote by  
$$
(u, \rho) \in C([0,T^*); H_0^s(\S) \times H^{s - 1}(\S)) \cap C^1([0,T^*); H_0^{s - 1}(\S) \times H^{s - 2}(\S))
$$ 
the solution of (\ref{2hs}) with $\kappa = -1$ with initial data $(u_0,\rho_0)$. This solution exists and is unique, see \cite{wuwu}. 
Furthermore, let $\varphi(t,x)$ with $\varphi(t,0) = 0$, $t \in [0,T^*)$, solve the Lagrangian flow map equation \eqref{flow}. Then
\begin{align} \label{phix}
\varphi (t,x) = \begin{cases}
\int_0^x \Bigl[\left( \cos t + \frac{u_{0x}(y)}{2} \; \sin t\right)^2 - \frac{\rho_0(y)^2}{4}\; \sin^2 t \Bigr]\;dy, & c = 1,
	\\[0.4cm] 
\int_0^x \Bigl[\left( 1 + \frac{u_{0x}(y)}{2}\;t \right)^2 - \frac{\rho_0(y)^2}{4}\;t^2 \Bigr]\;dy, & c = 0,
 	\\[0.4cm] 
 \int_0^x \Bigl[\left( \cosh t + \frac{u_{0x}(y)}{2} \; \sinh t\right)^2 - \frac{\rho_0(y)^2}{4}\; \sinh^2 t \Bigr]\;dy, & c = -1. 
 \end{cases}
 \end{align}
The first time when $\varphi(t,.)$ ceases to be injective is given in \eqref{singularity}. 
In the case of $c = -1$, the solution exists indefinitely if and only if 
\begin{equation} \label{globex}
|\rho_0(x)| \le u_{0x}(x) + 2 \qquad \text{for all} \quad  x \in \S,
\end{equation}
while all solutions when $c = 1$ and $c = 0$ inevitably develop singularities in finite time. 
\end{prop}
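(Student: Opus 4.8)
The plan is to reduce the whole proposition to the scalar factorization $\varphi_x = a\,b$, where $a$ and $b$ solve a linear second-order ODE. First I would linearize the Riccati equation \eqref{ric}: substituting $z = 2\dot w/w$ turns $z_t = -\tfrac12 z^2 - 2c$ into $\ddot w = -c\,w$, so with the normalization $w(0)=1$, $\dot w(0)=z_0/2$ one obtains $w = \cos t + \tfrac{z_0}{2}\sin t$, $\;w = 1 + \tfrac{z_0}{2}t$, or $w = \cosh t + \tfrac{z_0}{2}\sinh t$ according as $c = 1, 0, -1$. Write $a$ for the solution with $z_0 = p_0 = u_{0x}+\rho_0$ and $b$ for the one with $z_0 = q_0 = u_{0x}-\rho_0$, so that $p = 2\dot a/a$ and $q = 2\dot b/b$. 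Since $U = \tfrac12(p+q) = \tfrac{d}{dt}\log(ab)$, while differentiating \eqref{flow} in $x$ gives $\partial_t \varphi_x = U\varphi_x$ with $\varphi_x(0,x)=1$, integration yields $\varphi_x = \exp\!\int_0^t U\,d\tau = a(t,x)\,b(t,x)$. Expanding $ab$ and using $\tfrac{p_0}{2}+\tfrac{q_0}{2}=u_{0x}$ and $\tfrac{p_0}{2}\tfrac{q_0}{2}=\tfrac14(u_{0x}^2-\rho_0^2)$ reproduces the bracketed integrands in \eqref{phix}; since $u(t,\cdot)\in H_0^s(\S)$ forces $u(t,0)=0$ and hence $\varphi(t,0)=0$, integrating $\varphi_x$ in $x$ gives \eqref{phix} itself.

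For the breaking time I would use that $\varphi(t,\cdot)$ is a degree-one circle map which stays a diffeomorphism exactly while $\varphi_x = ab > 0$, so the first loss of injectivity is the first time $\min_{\S}\varphi_x$ reaches zero, i.e.\ the earliest zero over $x\in\S$ of the factors $a$ and $b$. Solving $a=0$, $b=0$ in each regime — $\cos t + \tfrac{z_0}{2}\sin t=0$ gives $t=\tfrac{\pi}{2}+\arctan\tfrac{z_0}{2}$ for $c=1$; $\;1+\tfrac{z_0}{2}t=0$ gives $t=-2/z_0$ (only for $z_0<0$) for $c=0$; and $\cosh t + \tfrac{z_0}{2}\sinh t=0$ gives $t={\rm arccoth}(-\tfrac{z_0}{2})$ (only for $z_0<-2$) for $c=-1$ — and then taking the minimum over the two factors and the earliest such zero over $x$, using the monotonicity of $\arctan$ and ${\rm arccoth}$, yields the critical time \eqref{singularity}.

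For $c=-1$ the factor $a=\cosh t + \tfrac{p_0}{2}\sinh t$ stays positive for all $t>0$ precisely when $p_0\ge -2$ (if $p_0=-2$ then $a=e^{-t}>0$, and if $p_0<-2$ then $\coth t=-\tfrac{p_0}{2}>1$ is solvable), and likewise $b>0$ for all $t$ iff $q_0\ge -2$; combining $u_{0x}+\rho_0\ge -2$ with $u_{0x}-\rho_0\ge -2$ is exactly $|\rho_0|\le u_{0x}+2$, which is \eqref{globex}, while any violation makes the corresponding factor vanish in finite time. The blow-up for $c=1$ is immediate, since $a(0,x)=1>0$ but $a(\pi,x)=-1<0$, so $a$ — hence $\varphi_x$ — vanishes before $t=\pi$ for every datum. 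For $c=0$ I would note that $\int_{\S}p_0\,dx + \int_{\S}q_0\,dx = 2\int_{\S}u_{0x}\,dx = 0$, so apart from the trivial solution at least one of $p_0,q_0$ is negative somewhere and its linear factor reaches zero at a finite time.

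The linchpin, and the only step needing genuine insight, is the factorization $\varphi_x = ab$ furnished by the Riccati linearization; once it is in hand, the regularity statement, the breaking time, the global-existence criterion, and the blow-up dichotomy all follow by inspecting the elementary zeros of $a$ and $b$. The only bookkeeping requiring care is the passage from the pointwise breaking times to the extrema in \eqref{singularity}, where the monotonicity of $\arctan$ and ${\rm arccoth}$ must be tracked.
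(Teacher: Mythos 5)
Your proposal is correct and takes essentially the same route as the paper: pass to Lagrangian coordinates, observe that $p = U + \varrho$ and $q = U - \varrho$ satisfy the Riccati equation \eqref{ric}, solve it explicitly in the three cases, and read off the breaking time from the first vanishing of the resulting denominators (equivalently, of your factors $a$ and $b$). Your linearization $z = 2\dot w/w$ and the identity $U = \frac{d}{dt}\log(ab)$ are a cleaner packaging of the same computation --- the paper's integrand in \eqref{phix} is precisely your product $ab$ --- rather than a genuinely different argument.
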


\begin{rem} In the case of $c = 1$, the above solution formula was found in \cite{kohlmann} using a different approach. 
\end{rem}

\begin{rem}
If $\kappa = 1$, then the solution to the Lagrangian flow map equation \eqref{flow} takes the form (cf. \cite{Wun11b, len_kahler})
$$
\varphi (t,x) = \int_0^x \left\{ \left( \cos t + \frac{u_{0x}(y)}{2} \; \sin t\right)^2 + \frac{\rho_0(y)^2}{4}\; \sin^2 t \right\}\;dy, 
$$
where we have assumed the normalization $c = \frac{1}{4} \int_{\S} \left(u_{0x}^2 +  \kappa \rho_0^2 \right) dx = 1$. 
\end{rem}

\begin{figure}
  \centering
  \subfloat[$u_{0x} = \cos(2\pi x)$, $\rho_0 = 3 \cos(2\pi x)$]{\includegraphics[width=0.32\textwidth]{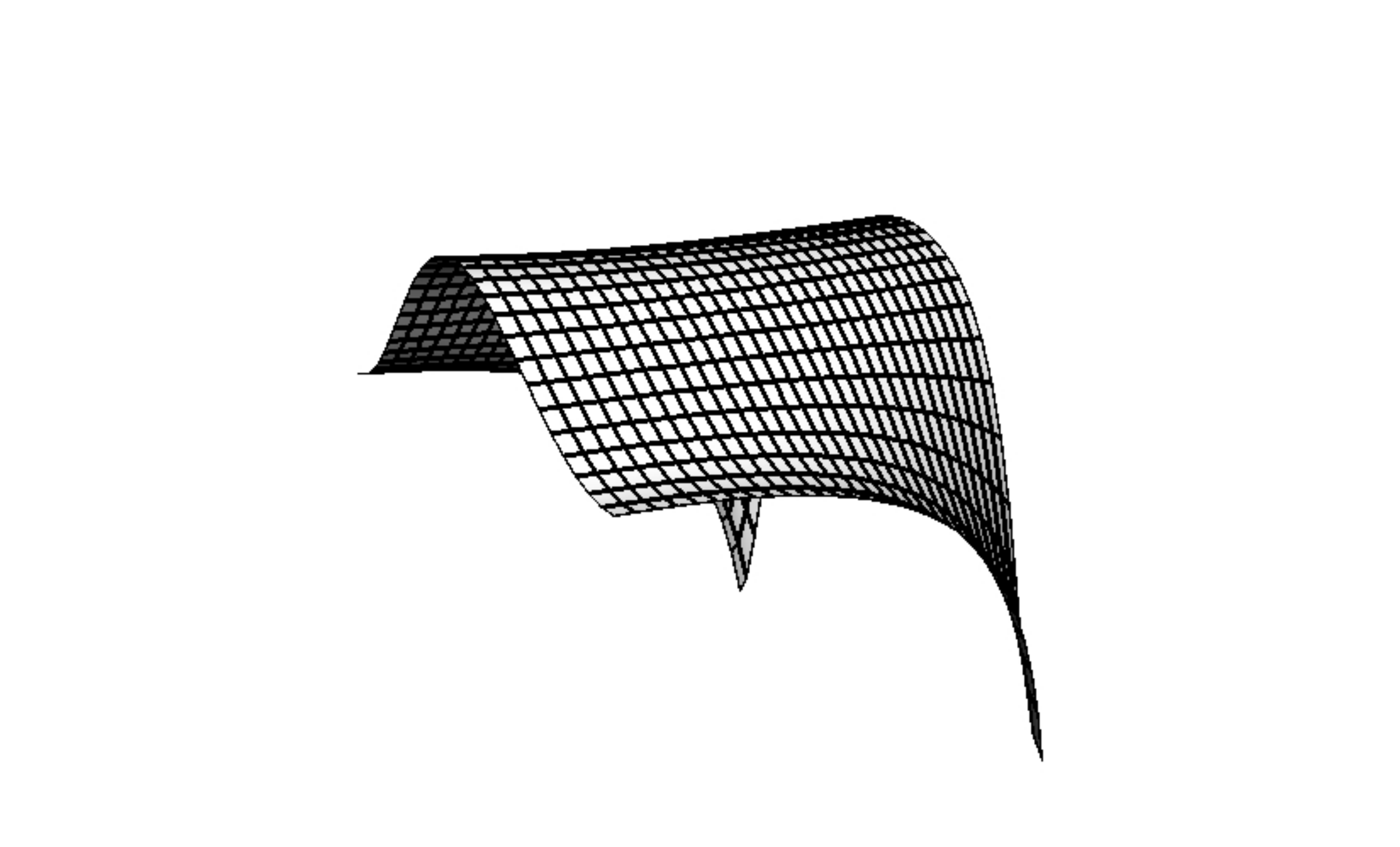}}                
  \subfloat[$u_{0x} = \cos(2\pi x)$, $\rho_0 = \frac{3}{\sqrt{2}}$]{\includegraphics[width=0.32\textwidth]{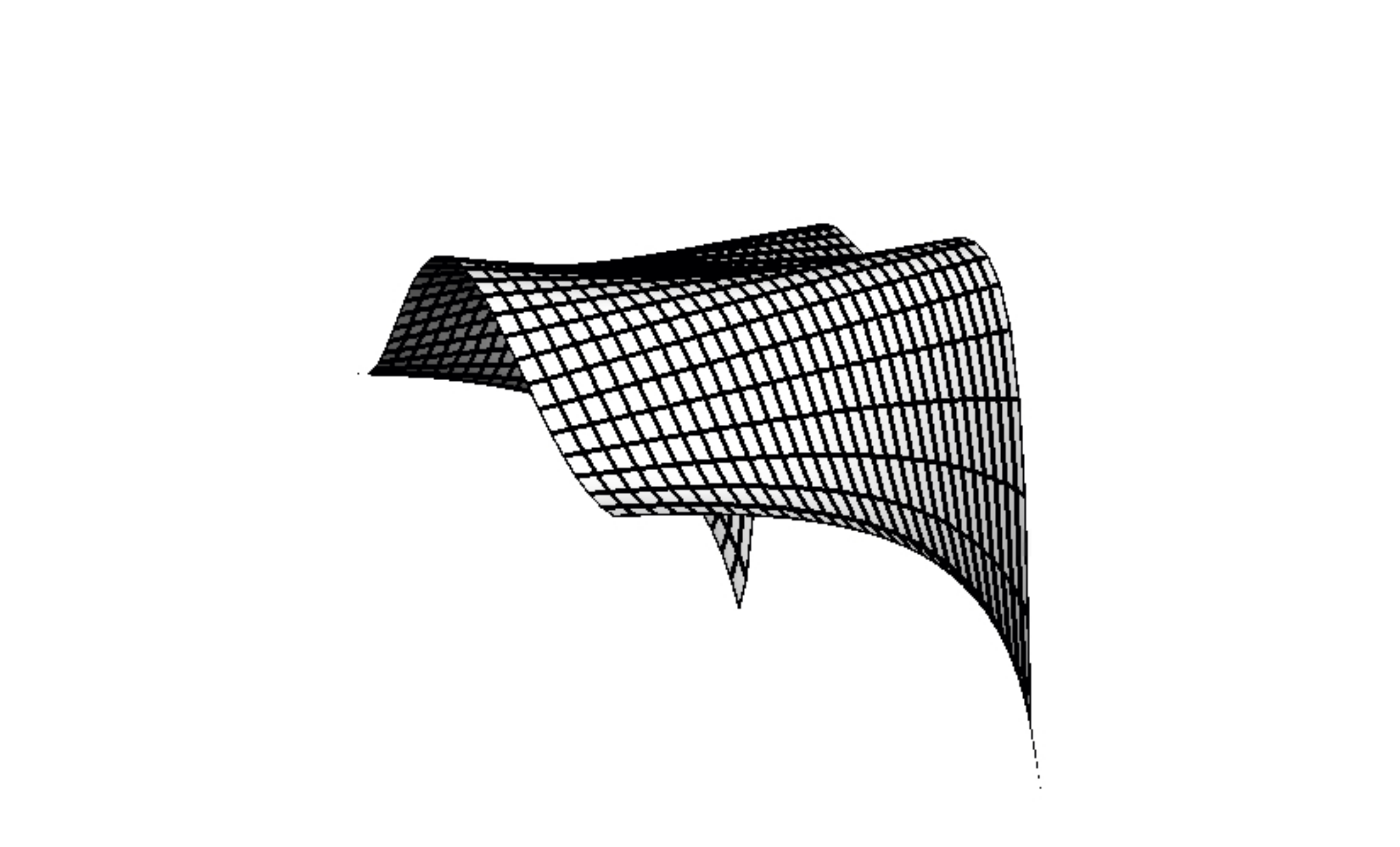}}
  \subfloat[$u_{0x} = \cos(2\pi x)$, $\rho_0 = \cos(2\pi x) + 2$]{\includegraphics[width=0.32\textwidth]{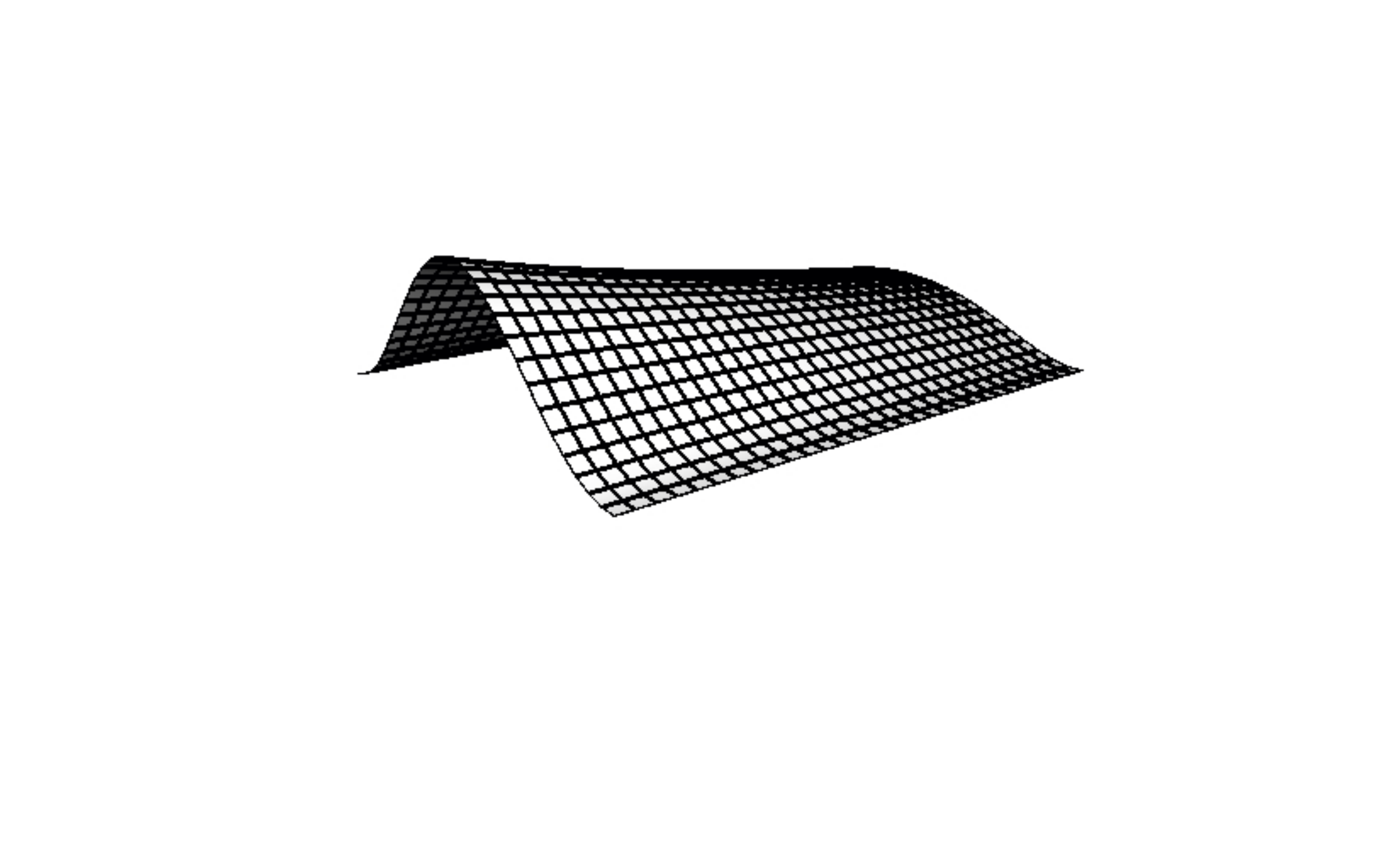}}
  \caption{{\sc Plots of $u_x(t,\varphi(t,x))$ corresponding to three different sets of initial data with 
  $c = -1$.} Note that the initial data in (a) and (b) violate the inequality \eqref{globex}, so that the emanating solutions break down in finite time, while the initial data set of (c) fulfills this condition and thus gives rise to global solutions.}
  \label{fig:a=2}
\end{figure}

\section{The geometry of the Hunter-Saxton system} \label{geometrysec}\nequation
Equation (\ref{2hs}) with $\kappa = 1$ describes the geodesic flow on a sphere \cite{len_kahler}. Here, we will show that \eqref{2hs} with $\kappa = -1$ describes the geodesic flow on a pseudosphere. More precisely, we will show that (\ref{2hs}) with $\kappa = -1$ is the Euler equation for the geodesic flow on a pseudo-Riemannian manifold $G^s$ and that $G^s$ is isomorphic to a subset of the unit pseudosphere in $L^2(S^1; \R^2)$.

\subsection{Preliminaries}
Suppose $s > 5/2$. Let $\Diff^s(S^1)$ denote the Banach manifold of orientation-preserving diffeomorphisms of $S^1$ of Sobolev class $H^s$. Let $\Diff_0^s(S^1)$ denote the subgroup of $\Diff^s(S^1)$ consisting of diffeomorphisms $\varphi$ such that $\varphi(0) = 0$.
Let $G^s$ denote the semidirect product $\Diff_0^s(\S) \circledS H^{s-1}(\S)$ with multiplication given by 
$$(\varphi, \alpha)(\psi, \beta) = (\varphi\circ\psi, \beta + \alpha \circ \psi).$$
The nondegenerate metric $\langle \cdot, \cdot \rangle$ on $G^s$ is defined at the identity by 
\begin{align}\label{metricatid}  
  \langle (u_1, u_2), (v_1, v_2) \rangle_{(\id, 0)} = \frac{1}{4}\int_{\S}\left(u_{1x} v_{1x} + \kappa u_2 v_2 \right) dx,
\end{align}
and extended to all of $G^s$ by right invariance, i.e.
\begin{align}\label{metric}
\langle U, V \rangle_{(\varphi,\alpha)} & = \left\langle (U_1 \circ\varphi^{-1}, U_2 \circ\varphi^{-1}), (V_1 \circ\varphi^{-1}, V_2 \circ\varphi^{-1}) \right\rangle_{(\id,0)}
	\\ \nonumber
& = \frac{1}{4} \int_{\S} \left(\frac{U_{1x}V_{1x}}{\varphi_x} + \kappa U_2V_2 \varphi_x \right) dx,
\end{align}
where $U = (U_1, U_2)$ and $V = (V_1, V_2)$ are elements of $T_{(\varphi, \alpha)}G^s \simeq H_0^{s}(\S) \times H^{s-1}(\S)$. 

Let $A = -\partial_x^2$. Then $A$ is an isomorphism $H_0^s(S^1) \to H_{\R}^{s-2}(S^1)$ 
with inverse given by
\begin{align}\label{Ainv}
  (A^{-1}f)(x) = -\int_0^x\int_0^y f(z)dzdy +  x \int_{S^1}\int_0^y f(z)dz dy
\end{align}
whenever $\int_{S^1} f dx = 0$. 
The following proposition expresses the fact that equation (\ref{2hs}) is the geodesic equation on $G^s$ in the sense that a curve $(\varphi(t), \alpha(t))$ in $G^s$ is a geodesic if and only if $(u(t), \rho(t)) \in T_{(\id, 0)}G^s$ defined by
\begin{align}\label{urhovarphitft}
 (u,\rho) 
 = (\varphi_t\circ\varphi^{-1},\alpha_t\circ\varphi^{-1})
\end{align}
satisfies (\ref{2hs}). 

\begin{prop}\label{geodesicprop}
Let $s > 5/2$. Let $(\varphi, \alpha):J \to G^s$ be a $C^2$-curve where $J \subset \R$ is an open interval and define $(u, \rho)$ by (\ref{urhovarphitft}). Then 
\begin{align}\label{urhoCC1J}
(u, \rho) \in C\left(J; H_0^s(S^1) \times H^{s-1}(S^1)\right) \cap C^1\left(J; H_0^{s-1}(S^1) \times H^{s-2}(S^1)\right)
\end{align}
and $(\varphi, \alpha)$ is a geodesic on $J$ if and only if $(u, \rho)$ satisfies the following weak form of (\ref{2hs}) for $t \in J$:
\begin{align}\label{weak2HS}
  & \begin{pmatrix} u_t + uu_x \\ \rho_t + u\rho_x \end{pmatrix} 
  = \begin{pmatrix} 
  - \frac{1}{2} A^{-1}\partial_x\bigl(u_x^2 + \kappa \rho^2\bigr) \\ -\rho u_x
  \end{pmatrix}.
\end{align}
\end{prop}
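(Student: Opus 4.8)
The plan is to recognize (\ref{2hs}) as the Euler--Poincar\'e (Euler--Arnold) equation of the right-invariant metric (\ref{metric}) and to verify the correspondence by a direct variational computation; the indefiniteness of the metric is immaterial here, since only its nondegeneracy enters. Recall that geodesics of a nondegenerate metric are exactly the critical points of the energy $E(\varphi,\alpha)=\tfrac12\int_J\langle(\varphi_t,\alpha_t),(\varphi_t,\alpha_t)\rangle_{(\varphi,\alpha)}\,dt$ among curves with fixed endpoints, and for a right-invariant metric this variational principle reduces to the Lie algebra. Right invariance together with the definition (\ref{urhovarphitft}) collapses the integrand to the value of the metric at the identity, giving
\begin{equation*}
  E(\varphi,\alpha)=\frac18\int_J\int_{\S}\bigl(u_x^2+\kappa\rho^2\bigr)\,dx\,dt.
\end{equation*}

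Before varying, I would dispose of the regularity claim (\ref{urhoCC1J}). Writing $u=\varphi_t\circ\varphi^{-1}$ and $\rho=\alpha_t\circ\varphi^{-1}$, this is the standard calculus of the Sobolev diffeomorphism group: composition of an $H^s$-function with an $H^s$-diffeomorphism stays in $H^s$ for $s>5/2$, while differentiating in $t$ and applying the chain rule (which brings in $\partial_t\varphi^{-1}$) costs exactly one spatial derivative. Hence $(u,\rho)$ is continuous into $H_0^s(\S)\times H^{s-1}(\S)$ and $C^1$ into $H_0^{s-1}(\S)\times H^{s-2}(\S)$, and the condition $u(t,0)=0$ persists because $\varphi(t,0)=0$ forces $\varphi_t(t,0)=0$.

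The computational heart is the Lie-algebraic reduction. Differentiating the conjugation built from $(\varphi,\alpha)(\psi,\beta)=(\varphi\circ\psi,\beta+\alpha\circ\psi)$ gives the adjoint action, and a further $t$-differentiation at the identity yields the bracket
\begin{equation*}
  \mathrm{ad}_{(u_1,u_2)}(v_1,v_2)=\bigl(u_{1x}v_1-u_1v_{1x},\; u_{2x}v_1-u_1v_{2x}\bigr)
\end{equation*}
on $\mathfrak g\simeq H_0^s(\S)\times H^{s-1}(\S)$. Integrating (\ref{metricatid}) by parts identifies the inertia operator as $\tfrac14(Au,\kappa\rho)$ with $A=-\partial_x^2$, so the momentum is $\mathfrak m=\tfrac14(m,\kappa\rho)$, $m=-u_{xx}$. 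Inserting the bracket into $\langle\mathrm{ad}^*_{(u,\rho)}\mathfrak m,(v_1,v_2)\rangle=\langle\mathfrak m,\mathrm{ad}_{(u,\rho)}(v_1,v_2)\rangle$ and integrating by parts gives
\begin{equation*}
  \mathrm{ad}^*_{(u,\rho)}\mathfrak m=\tfrac14\bigl(2mu_x+m_xu+\kappa\rho\rho_x,\;\kappa(u\rho)_x\bigr).
\end{equation*}
Writing out the (right-invariant) constrained variations $\delta(u,\rho)=\partial_t(w,z)-\mathrm{ad}_{(u,\rho)}(w,z)$, with reduced field $(w,z)$ vanishing at the endpoints, and imposing $\delta E=0$ produces exactly $\partial_t\mathfrak m+\mathrm{ad}^*_{(u,\rho)}\mathfrak m=0$, whose components are $m_t+um_x+2mu_x+\kappa\rho\rho_x=0$ and $\rho_t+(u\rho)_x=0$, i.e.\ (\ref{2hs}). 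Since Euler--Poincar\'e reduction is an equivalence — the reconstruction of $(\varphi,\alpha)$ from $(u,\rho)$ via (\ref{flow}) returns a geodesic — both implications follow.

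It remains to pass from this momentum form to the stated weak form (\ref{weak2HS}), and this inversion is where I expect the \emph{only} real subtlety. Substituting $m=-u_{xx}$ and applying $A=-\partial_x^2$ to $u_t+uu_x$, a short computation using the first component above gives $A(u_t+uu_x)=-\tfrac12\partial_x(u_x^2+\kappa\rho^2)$. The right-hand side has zero mean, and $u_t+uu_x$ vanishes at $x=0$ (since $u(t,0)=0$), hence lies in $H_0^{s-1}(\S)$; so the inverse (\ref{Ainv}) applies and returns the first line of (\ref{weak2HS}), while the second line is just $\rho_t+(u\rho)_x=0$ rewritten as $\rho_t+u\rho_x=-\rho u_x$. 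The point requiring care throughout is the bookkeeping of the mean-zero and base-point normalizations: it is precisely the projection onto $H_0^s(\S)$, equivalently the specific inverse (\ref{Ainv}), that turns the local momentum balance into the nonlocal evolution equation (\ref{weak2HS}).
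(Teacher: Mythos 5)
Your derivation is correct in substance and reaches the right equation---the bracket, the inertia operator $\tfrac14(A\,\cdot,\kappa\,\cdot)$, the coadjoint term $\tfrac14\bigl(2mu_x+m_xu+\kappa\rho\rho_x,\;\kappa(u\rho)_x\bigr)$, and the inversion step $A(u_t+uu_x)=-\tfrac12\partial_x(u_x^2+\kappa\rho^2)$ together with the observation that both sides lie in the domain where $A$ is invertible all check out---but it is not the paper's route. The paper disposes of Proposition \ref{geodesicprop} by citing Proposition 4.1 of \cite{len_kahler}, whose argument is a direct Lagrangian computation: one differentiates (\ref{urhovarphitft}) in $t$ and compares $(\varphi_{tt},\alpha_{tt})$ with the right-invariant Christoffel map, so that ``geodesic'' means a solution of $(\varphi_{tt},\alpha_{tt})=\Gamma_{(\varphi,\alpha)}\bigl((\varphi_t,\alpha_t),(\varphi_t,\alpha_t)\bigr)$ (cf.\ (\ref{gami0}) and (\ref{strong_geod}) in Section \ref{main}) and the equivalence with (\ref{weak2HS}) becomes a pointwise identity after composing with $\varphi^{-1}$. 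Your Euler--Poincar\'e reduction buys structural insight---it exhibits (\ref{2hs}) as $\partial_t\mathfrak{m}+\mathrm{ad}^*_{(u,\rho)}\mathfrak{m}=0$ and makes transparent why the sign of $\kappa$ is immaterial to the computation---but it needs two caveats to be fully rigorous in this setting: first, $G^s$ is only a topological group (composition is smooth in one argument only), so the textbook reduction theorem does not apply verbatim and the constrained variation $\delta(u,\rho)=\partial_t(w,z)-\mathrm{ad}_{(u,\rho)}(w,z)$ loses a derivative; second, for a weak pseudo-Riemannian metric the identification of geodesics with critical points of the energy presupposes the existence of a compatible smooth connection, which is exactly what the direct Christoffel computation exhibits and what the statement of the proposition implicitly relies on. The direct approach trades your elegance for rigor at precisely these points; as a formal verification that the Euler--Arnold equation of the metric (\ref{metricatid}) is (\ref{2hs}), your computation is sound.
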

\proofbegin
The case of $\kappa = 1$ was treated in Proposition 4.1 of \cite{len_kahler}; the proof when $\kappa = -1$ is similar.
\proofend

\subsection{A pseudosphere}
Let $\dS$ denote the unit pseudosphere in $L_2(\S; \R^2)$ defined by
$$\dS = \biggl\{(f_1, f_2) \in L_2(\S; \R^2) \; \bigg| \; \int_{\S} \left(f_1^2(x) - f_2^2(x)\right) dx = 1\biggr\}.$$
Let $\dS^s$ denote the elements in $\dS$ that are of Sobolev class $H^s$.
Then $\dS^s$ is a Banach submanifold of $H^s(\S; \R^2)$ (cf. \cite{L1995} p. 29).
The indefinite scalar product on $L_2(\S; \R^2)$ defined for $X = (X_1, X_2)$ and $Y = (Y_1, Y_2)$ in $L_2(S^1; \R^2)$ by
$$\llangle X, Y \rrangle = \int_{\S} (X_1Y_1 - X_2Y_2) dx,$$
induces a weak pseudo-Riemannian metric $\llangle \cdot, \cdot \rrangle$ on $\dS^{s}$.

\begin{rem}
Recall that the $n$-dimensional pseudosphere $S_\nu^{n}(r)$ of index $\nu$ and radius $r>0$ is defined as the submanifold 
\begin{align}\label{pseudospheredef}
  S^n_\nu(r) = \biggl\{(x_1, \dots, x_{n+1}) \in \R^{n+1} \bigg|  - \sum_{i=1}^{\nu} x_{i}^2 + \sum_{i=\nu+1}^{n+1} x_i^2 = r^2\biggr\},
\end{align}  
equipped with the pseudo-Riemannian metric induced by the indefinite bilinear form 
$$ds^2 = - \sum_{i=1}^{\nu} dx_{i}^2 + \sum_{i=\nu+1}^{n+1} dx_i^2.$$
If $\nu = 1$, pseudospheres are the Minkowskian analogs of spheres in Euclidean space. The curvature of $S_\nu^n(r)$ is constant and equal to $1/r^2$. 
We refer to \cite{oneill, W2011} for more background on (finite-dimensional) pseudospheres. 
\end{rem}

\noindent
We let $\mathcal{U}^{s} \subset L^2(S^1; \R^2)$ denote the following open subset of $\dS^{s}$:
\begin{align}\label{Usdef}
\mathcal{U}^{s} = \left\{(f_1, f_2) \in \dS^s \;\middle|\; f_1(x) > 0 \; \hbox{and} \; f_1^2(x) - f_2^2(x) > 0 \; \hbox{for all} \; x \in S^1\right\},
\end{align}
and equip $\mathcal{U}^{s}$ with the manifold structure and metric inherited from $\dS^{s}$.

\begin{thm}\label{sphereth}
The space $(G^s, \langle \cdot, \cdot \rangle)$ is isometric to a subset of the unit pseudosphere in $L^2(S^1; \R^2)$. 
More precisely, for any $s > 5/2$, the map $\Phi:G^s \to \mathcal{U}^{s-1} \subset \dS^{s-1}$ defined by
$$\Phi(\varphi, \alpha) =  \sqrt{\varphi_x}(\cosh(\alpha/2), \sinh(\alpha/2))$$
is a diffeomorphism and an isometry.
\end{thm}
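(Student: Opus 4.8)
The plan is to exhibit an explicit two-sided inverse for $\Phi$ and then establish the isometry property by a single pointwise computation driven by the hyperbolic Pythagorean identity $\cosh^2 - \sinh^2 = 1$. First I would check that $\Phi$ is well defined, i.e. that it genuinely takes values in $\mathcal{U}^{s-1}$. Writing $\Phi(\varphi, \alpha) = (f_1, f_2)$ with $f_1 = \sqrt{\varphi_x}\cosh(\alpha/2)$ and $f_2 = \sqrt{\varphi_x}\sinh(\alpha/2)$, the identity $\cosh^2 - \sinh^2 = 1$ gives $f_1^2 - f_2^2 = \varphi_x$. Since $\varphi \in \Diff_0^s(\S)$ satisfies $\varphi_x > 0$ and $\int_{\S}\varphi_x\,dx = 1$, we obtain $\int_{\S}(f_1^2 - f_2^2)\,dx = 1$ together with the pointwise inequalities $f_1 > 0$ and $f_1^2 - f_2^2 > 0$, so $(f_1, f_2) \in \mathcal{U}^{s-1}$. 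The only functional-analytic point here is regularity: because $s - 1 > 1/2$, the space $H^{s-1}(\S)$ is a Banach algebra that is closed under superposition with smooth functions, and $\varphi_x$ is bounded away from zero by compactness of $\S$, so $\sqrt{\varphi_x}$, $\cosh(\alpha/2)$ and $\sinh(\alpha/2)$ all lie in $H^{s-1}(\S)$ and hence $(f_1, f_2) \in H^{s-1}(\S; \R^2)$.

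Next I would construct the inverse. Given $(f_1, f_2) \in \mathcal{U}^{s-1}$, the relations above force $\varphi_x = f_1^2 - f_2^2$ and $\tanh(\alpha/2) = f_2/f_1$; equivalently, using $f_1 \pm f_2 = \sqrt{\varphi_x}\,e^{\pm \alpha/2}$, one recovers
\begin{equation*}
\varphi(x) = \int_0^x \bigl(f_1^2 - f_2^2\bigr)(y)\,dy, \qquad \alpha = \ln\frac{f_1 + f_2}{f_1 - f_2}.
\end{equation*}
The open conditions defining $\mathcal{U}^{s-1}$ guarantee that $f_1 \geq \delta_1 > 0$ and $f_1^2 - f_2^2 \geq \delta_2 > 0$ on the compact circle, so $f_1 \pm f_2$ are bounded away from zero and the square root and logarithm are applied to functions confined to a compact subset of $(0, \infty)$; by the same superposition property one gets $\varphi \in H^s(\S)$ with $\varphi_x > 0$, $\varphi(0) = 0$, $\varphi(1) = 1$ (hence $\varphi \in \Diff_0^s(\S)$) and $\alpha \in H^{s-1}(\S)$. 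These formulae furnish a genuine two-sided inverse, and since both $\Phi$ and its inverse are assembled from the algebra operations and superposition with smooth functions on $H^{s-1}$, both are smooth; therefore $\Phi$ is a diffeomorphism onto $\mathcal{U}^{s-1}$.

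For the isometry, I would differentiate $\Phi$ along a tangent vector $(U_1, U_2) \in T_{(\varphi, \alpha)}G^s$, obtaining
\begin{equation*}
d\Phi_{(\varphi, \alpha)}(U_1, U_2) = \Bigl(\tfrac{U_{1x}}{2\sqrt{\varphi_x}}\cosh\tfrac{\alpha}{2} + \tfrac{\sqrt{\varphi_x}\,U_2}{2}\sinh\tfrac{\alpha}{2},\; \tfrac{U_{1x}}{2\sqrt{\varphi_x}}\sinh\tfrac{\alpha}{2} + \tfrac{\sqrt{\varphi_x}\,U_2}{2}\cosh\tfrac{\alpha}{2}\Bigr).
\end{equation*}
Writing $X = d\Phi(U_1, U_2)$ and $Y = d\Phi(V_1, V_2)$ and expanding $X_1 Y_1 - X_2 Y_2$, every mixed term carries a factor $\cosh\tfrac{\alpha}{2}\sinh\tfrac{\alpha}{2}$ that cancels, while the surviving terms come multiplied by $\cosh^2\tfrac{\alpha}{2} - \sinh^2\tfrac{\alpha}{2} = 1$, leaving the clean pointwise identity $X_1 Y_1 - X_2 Y_2 = \frac{U_{1x}V_{1x}}{4\varphi_x} - \frac{\varphi_x U_2 V_2}{4}$. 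Integrating over $\S$ and comparing with \eqref{metric} in the case $\kappa = -1$ yields $\llangle d\Phi(U_1, U_2), d\Phi(V_1, V_2)\rrangle = \langle (U_1, U_2), (V_1, V_2)\rangle_{(\varphi, \alpha)}$, so $\Phi$ is an isometry. The only real obstacle is the bookkeeping in the first two paragraphs, namely verifying that the explicit formulae respect the $H^{s-1}$ Sobolev structure and that the degeneracies are controlled on the compact circle; the signature-preserving computation at the heart of the isometry is then a one-line consequence of $\cosh^2 - \sinh^2 = 1$, which is precisely the indefinite analogue of the $\kappa = 1$ argument.
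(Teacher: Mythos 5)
Your proposal is correct and follows essentially the same route as the paper: exhibit the explicit inverse $\Phi^{-1}(f) = \bigl(\int_0^x (f_1^2 - f_2^2)\,dy,\ 2\arctanh(f_2/f_1)\bigr)$ (your logarithmic formula $\alpha = \ln\frac{f_1+f_2}{f_1-f_2}$ is the same function), then compute the tangent map and reduce the isometry to the pointwise identity $\cosh^2 - \sinh^2 = 1$. The only difference is that you spell out the Sobolev bookkeeping (Banach algebra and superposition properties of $H^{s-1}$ for $s-1 > 1/2$) in more detail than the paper does.
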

\proofbegin
If $f = (f_1, f_2) \in \mathcal{U}^{s-1}$, then the function $\varphi(x) = \int_0^x (f_1^2(y) - f_2^2(y)) dy$ satisfies
$\varphi(0) = 0$, $\varphi(1) = 1$, $\varphi_x = f_1^2 - f_2^2 \in H^{s-1}(S^1)$, and $\varphi_x > 0$, while the function $\alpha(x) = 2\arctanh{\frac{f_2(x)}{f_1(x)}}$ belongs to $H^{s-1}(S^1)$.
Thus, using the identities
$$\cosh(\arctanh{x}) = \frac{1}{\sqrt{1 - x^2}}, \qquad
\sinh(\arctanh{x}) =  \frac{x}{\sqrt{1 - x^2}}, \qquad -1 < x < 1,$$
we find that the inverse of $\Phi$ is given explicitly by
\begin{align}\label{Phiinverse}
  \Phi^{-1}(f) = \left(\int_0^x (f_1^2(y) - f_2^2(y)) dy, 2\arctanh{\frac{f_2(x)}{f_1(x)}}\right), \qquad f \in \mathcal{U}^{s-1}.
\end{align}  
This shows that $\Phi$ is bijective. Since both $\Phi$ and $\Phi^{-1}$ are smooth, $\Phi$ is a diffeomorphism.

Using that
$$T_{(\varphi, \alpha)}\Phi (U_1, U_2)
= \frac{1}{2\sqrt{\varphi_x}}\left(U_{1x}\cosh{\frac{\alpha}{2}} + \varphi_xU_2 \sinh{\frac{\alpha}{2}}, U_{1x}\sinh{\frac{\alpha}{2}} + \varphi_xU_2 \cosh{\frac{\alpha}{2}}\right),$$
we find that
\begin{align*}
 \llangle T_{(\varphi, \alpha)} (&U_1, U_2),  T_{(\varphi, \alpha)} (V_1, V_2)\rrangle
	\\
& = \int_{\S} \frac{1}{4\varphi_x} \biggl\{\left(U_{1x}\cosh{\frac{\alpha}{2}} + \varphi_xU_2 \sinh{\frac{\alpha}{2}}\right)
\left(V_{1x}\cosh{\frac{\alpha}{2}} + \varphi_xV_2 \sinh{\frac{\alpha}{2}}\right)
	\\
& \hspace{2cm} - \left(U_{1x}\sinh{\frac{\alpha}{2}} + \varphi_xU_2 \cosh{\frac{\alpha}{2}}\right)
\left(V_{1x}\sinh{\frac{\alpha}{2}} + \varphi_xV_2 \cosh{\frac{\alpha}{2}}\right)\biggr\} dx
	\\
& = \int_{\S} \frac{1}{4\varphi_x} \biggl\{ U_{1x} V_{1x} - \varphi_x^2 U_2 V_2\biggr\} dx
= \langle (U_1, U_2), (V_1, V_2) \rangle_{(\varphi, \alpha)},
\end{align*}
whenever $(U_1, U_2)$ and $(V_1, V_2)$ belong to $T_{(\varphi, \alpha)}G^s$.
This shows that $\Phi$ is an isometry. 
\proofend

\begin{cor}\label{curvcor}
The sectional curvature of $(G^s, \langle \cdot, \cdot \rangle)$ is constant and equal to $1$.
\end{cor}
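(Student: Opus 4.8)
The plan is to exploit the isometry of Theorem~\ref{sphereth}. Since sectional curvature is preserved by isometries, and since $\mathcal{U}^{s-1}$ is an \emph{open} subset of the pseudosphere $\dS^{s-1}$ and thus agrees with $\dS^{s-1}$ on curvature at each of its points, it suffices to show that every nondegenerate tangent $2$-plane of $\dS^{s-1}$ has sectional curvature $1$. I would therefore set $G^s$ aside and compute the curvature of the pseudosphere directly, treating it as a hypersurface in the flat pseudo-Euclidean space $(L_2(\S;\R^2), \llangle\cdot,\cdot\rrangle)$ whose ambient Levi-Civita connection $\bar D$ is just the directional derivative.

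The geometric input is that the pseudosphere is totally umbilic, with the position vector as normal. At $f\in\dS^{s-1}$ the tangent space is $T_f\dS^{s-1}=\{X:\llangle X,f\rrangle=0\}$, so $f$ itself is normal, and it is \emph{non-null}: $\llangle f,f\rrangle=1$. Hence $\R f$ complements $T_f\dS^{s-1}$ and the $\llangle\cdot,\cdot\rrangle$-orthogonal projection $P_f Z = Z - \llangle Z,f\rrangle f$ onto the tangent space is a bounded operator depending smoothly on $f$. This lets me define the induced connection explicitly by $\nabla_X Y = P(\bar D_X Y)$, which one checks directly to be torsion-free and metric. The key computation is that of the second fundamental form $\mathrm{II}(X,Y)=\bar D_X Y-\nabla_X Y = \llangle \bar D_X Y, f\rrangle f$: since $\bar D_X f = X$ and $\llangle Y,f\rrangle=0$ along the surface, differentiating gives $\llangle \bar D_X Y, f\rrangle = -\llangle X,Y\rrangle$, so that $\mathrm{II}(X,Y) = -\llangle X,Y\rrangle\,f$.

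With this the Gauss equation closes the argument. Because the ambient space is flat, for any nondegenerate $2$-plane spanned by $X,Y\in T_f\dS^{s-1}$ one has
\begin{align*}
K(X,Y) = \frac{\llangle \mathrm{II}(X,X), \mathrm{II}(Y,Y)\rrangle - \llangle \mathrm{II}(X,Y), \mathrm{II}(X,Y)\rrangle}{\llangle X,X\rrangle\llangle Y,Y\rrangle - \llangle X,Y\rrangle^2},
\end{align*}
and substituting $\mathrm{II}(X,Y)=-\llangle X,Y\rrangle f$ together with $\llangle f,f\rrangle=1$ makes the numerator equal to $\llangle X,X\rrangle\llangle Y,Y\rrangle-\llangle X,Y\rrangle^2$, so that $K\equiv 1$. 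This is precisely the value $1/r^2$ at $r=1$ predicted by the finite-dimensional formula recalled in the Remark following \eqref{pseudospheredef}, as it must be.

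The one genuine obstacle is that $\llangle\cdot,\cdot\rrangle$ is only a \emph{weak} and \emph{indefinite} metric on the $H^{s-1}$-manifold $\dS^{s-1}$, so neither the Levi-Civita connection nor the curvature tensor is guaranteed by the abstract finite-dimensional theory. I would sidestep this by never invoking that theory: the formulas $\nabla_X Y = P(\bar D_X Y)$ and $\mathrm{II}(X,Y)=-\llangle X,Y\rrangle f$ are explicit and well-defined precisely because the normal $f$ is non-null, which makes $P_f$ a bona fide bounded projection, and the Gauss equation then holds by direct computation rather than by appeal to a general theorem. As a cross-check one can instead reduce each plane to a finite-dimensional totally geodesic pseudosphere: the intersection of $\dS^{s-1}$ with the (nondegenerate, since $f\perp\mathrm{span}(X,Y)$ and $\llangle f,f\rrangle=1$) subspace $\mathrm{span}(f,X,Y)$ is a finite-dimensional $S^n_\nu(1)$ containing the plane, and being the fixed-point set of a linear ambient isometry it is totally geodesic, so its curvatures agree with those of $\dS^{s-1}$ and the claim reduces to the Remark. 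Throughout, attention must be restricted to nondegenerate $2$-planes, on which alone the sectional curvature is defined.
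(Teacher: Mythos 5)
Your proposal is correct and follows essentially the same route as the paper: reduce via the isometry of Theorem~\ref{sphereth} to the pseudosphere, compute the second fundamental form as $\Pi(X,Y)=-\llangle X,Y\rrangle\, f$ using the position vector as unit normal, and conclude by the Gauss equation. The extra care you take with the weak indefinite metric (defining the connection explicitly via the bounded projection $P_f$) and the cross-check via totally geodesic finite-dimensional pseudospheres are welcome refinements, but the core argument is the paper's.
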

\proofbegin 
In view of Theorem \ref{sphereth}, it is enough to prove that the unit pseudosphere $\dS^{s}$ has constant sectional curvature equal to $1$. As in the finite-dimensional case, this can be proved using the Gauss equation.\footnote{The Gauss equation holds also for pseudo-Riemannian Banach manifolds, cf. \cite{oneill} p. 100 and \cite{L1995} p. 390.}
Indeed, let $n$ denote the outward normal to $\dS^s \subset H^s(\S; \R^2)$. Since the outward normal to $\dS^s$ at $f$ is $f$ itself, $n$ is the identity map.  
Moreover, the tangent space at a point $f = (f_1, f_2) \in \dS^s$ is given by
$$T_f \dS^s = \left\{X=(X_1, X_2) \in H^s(\S; \R^2) \; \middle| \; \int_{\S} (f_1 X_1 - f_2 X_2) dx = 0\right\},$$
and the metric connection on $\dS$ is given by (see \cite{oneill} p. 99) 
\begin{align}\label{sphereconnection}
  (\nabla_X Y)(f) = (DY \cdot X)^T
\end{align}
where $Z^T$ denotes the orthogonal projection of a vector $Z \in T_f H^s(\S; \R^2)$ onto $T_f \dS^s$ with respect to $\llangle \cdot, \cdot \rrangle$.
Thus, if $X$ is a vector field on $\dS^s$,
$$\nabla_X n = (Dn \cdot X)^T = X.$$
It follows that the second fundamental form $\Pi$ is given by
$$\Pi(X, Y) = - \llangle \nabla_X n, Y \rrangle n = - \llangle X, Y \rrangle n,$$
where $X,Y$ are vector fields on $\dS^s$. Consequently, if $X$ and $Y$ are orthonormal, the curvature tensor $R$ on $\dS^s$ satisfies
\begin{align*}
\llangle R(X,Y)Y, X \rrangle
& = \llangle \Pi(X,X), \Pi(Y,Y) \rrangle
- \llangle \Pi(X,Y), \Pi(Y,X) \rrangle
	\\
& = \llangle X,X \rrangle \llangle Y,Y \rrangle
- \llangle X, Y \rrangle \llangle Y,X \rrangle  = 1.	
\end{align*}
\proofend

\begin{rem}
In Appendix \ref{Aapp}, we give an alternative direct proof of Corollary \ref{curvcor} which does not rely on the isometry of Theorem \ref{sphereth}.
\end{rem}

\subsection{Geodesics}
Just like in the finite-dimensional case, we can write down explicit formulas for the geodesics on the pseudosphere $\mathcal{S}^s$. In this way, we recover the explicit solution formulas of Section \ref{explicit} for the Hunter-Saxton system.
Moreover, these geodesics are naturally divided into three types---spacelike, lightlike, and timelike---and these types correspond to the three cases distinguished in Section \ref{explicit}.

\begin{thm}\label{geoth}
Let $s > 5/2$ and let $(u_0, \rho_0) \in H_0^s(\S) \times H^{s-1}(\S)$.
Let $f(t) = (f_1(t), f_2(t))$ denote the unique geodesic in $\dS^{s-1}$ such that
\begin{align}\label{geodesicinitialconditions}
  f(0) = (1,0), \qquad f_t(0) = \frac{1}{2}(u_{0x}, \rho_0).
\end{align}  
Define $c \in \R$ by 
$$c = \llangle f_t(0), f_t(0) \rrangle = \frac{1}{4}\int_{\S} (u_{0x}^2 - \rho_0^2)dx.$$ 
Then
\begin{align}\label{fgeodesicformula}
  f(t) = \begin{cases} 
  \left(\cos(\sqrt{c}t) + \frac{u_{0x}}{2\sqrt{c}}\sin(\sqrt{c}t), \frac{\rho_0}{2\sqrt{c}}\sin(\sqrt{c}t)\right), & c > 0 \; \text{(spacelike)}, \\
 \left(\frac{u_{0x}}{2}t + 1, \frac{\rho_0}{2} t \right), & c = 0 \;\text{(lightlike)}, \\
 \left(\cosh(\sqrt{|c|}t) + \frac{u_{0x}}{2\sqrt{|c|}}\sinh(\sqrt{|c|}t), \frac{\rho_0}{2\sqrt{|c|}}\sinh(\sqrt{|c|}t)\right), & c < 0 \;\text{(timelike)}.
 \end{cases}
\end{align}
\end{thm}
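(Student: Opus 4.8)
The plan is to reduce the geodesic equation on $\dS^{s-1}$ to an elementary linear second-order ordinary differential equation in the ambient space $L^2(\S;\R^2)$, which can then be solved in closed form. The crucial point is that the indefinite scalar product $\llangle\cdot,\cdot\rrangle$ is a \emph{constant} bilinear form on the vector space $L^2(\S;\R^2)$, so the ambient connection is just the ordinary $t$-derivative and the ambient second derivative $f_{tt}$ of a curve $f(t)$ is computed pointwise. By the Gauss formula, the covariant acceleration $\nabla_{f_t}f_t$ is the tangential part of $f_{tt}$, so $f$ is a geodesic precisely when $f_{tt}$ is everywhere normal to $\dS^{s-1}$.

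First I would recall from the proof of Corollary \ref{curvcor} that the outward normal to $\dS^{s-1}$ at a point $f$ is $f$ itself and that the second fundamental form is $\Pi(X,Y)=-\llangle X,Y\rrangle f$. Hence the geodesic equation reads $f_{tt}=\Pi(f_t,f_t)=-\llangle f_t,f_t\rrangle f$. The factor $\llangle f_t,f_t\rrangle$ is conserved along the geodesic: since $\llangle f,f\rrangle\equiv 1$ gives $\llangle f,f_t\rrangle=0$, one computes $\tfrac{d}{dt}\llangle f_t,f_t\rrangle=2\llangle f_{tt},f_t\rrangle=-2\llangle f_t,f_t\rrangle\llangle f,f_t\rrangle=0$, so $\llangle f_t,f_t\rrangle\equiv c$ with $c=\llangle f_t(0),f_t(0)\rrangle=\tfrac14\int_{\S}(u_{0x}^2-\rho_0^2)\,dx$. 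The geodesic equation therefore collapses to the linear, constant-coefficient system $f_{tt}=-c\,f$.

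With this reduction the three cases are immediate. Solving $f_{tt}=-c\,f$ with $f(0)=(1,0)$ and $f_t(0)=\tfrac12(u_{0x},\rho_0)$ gives, for $c>0$, a combination of $\cos(\sqrt{c}\,t)$ and $\sin(\sqrt{c}\,t)$; for $c=0$, the affine solution $f(0)+t\,f_t(0)$; and for $c<0$, a combination of $\cosh(\sqrt{|c|}\,t)$ and $\sinh(\sqrt{|c|}\,t)$. Reading off the two components yields exactly \eqref{fgeodesicformula}. To make the argument fully rigorous I would verify directly that the curve so defined stays on $\dS^{s-1}$: writing $g=\llangle f,f\rrangle$, $h=\llangle f,f_t\rrangle$, $k=\llangle f_t,f_t\rrangle$, the relations $g_t=2h$, $h_t=-cg+k$, $k_t=-2ch$, together with $g(0)=1$, $h(0)=\tfrac12\int_{\S}u_{0x}\,dx=0$ (by periodicity) and $k(0)=c$, force $g\equiv 1$, $h\equiv 0$, $k\equiv c$ by uniqueness for this linear system. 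This simultaneously shows $f(t)\in\dS^{s-1}$ for all $t$ and establishes existence and uniqueness of the geodesic, since any geodesic must solve $f_{tt}=-c\,f$ with the prescribed data.

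The main obstacle is not the computation but the legitimacy of the reduction in the infinite-dimensional weak pseudo-Riemannian setting, where neither a Levi-Civita connection nor geodesic existence is a priori guaranteed. This difficulty is bypassed by working extrinsically: because the ambient form is constant, the intrinsic geodesic equation is genuinely the ODE $f_{tt}=-c\,f$, whose elementary solution theory makes the phrase ``the unique geodesic'' well defined. Finally, combining \eqref{fgeodesicformula} with the isometry $\Phi$ of Theorem \ref{sphereth} and the identities $\varphi_x=f_1^2-f_2^2$ and $\alpha=2\arctanh(f_2/f_1)$ recovers the explicit flow-map formulas \eqref{phix} of Section \ref{explicit}, delivering the promised geometric derivation of the solution formulae.
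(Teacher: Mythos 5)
Your proposal is correct and follows essentially the same route as the paper: both rest on the fact that the covariant derivative on $\dS^{s-1}$ is the tangential part of the ambient $t$-derivative, that the outward normal at $f$ is $f$ itself, and hence that the geodesic equation reduces to the linear ODE $f_{tt}+cf=0$ with $c=\llangle f_t,f_t\rrangle$ conserved. The paper simply verifies that the stated formula satisfies this equation, lies on $\dS^{s-1}$, and has the right initial data, whereas you additionally derive the ODE in the forward direction and check conservation of $g,h,k$ explicitly, which makes the uniqueness claim slightly more self-contained but does not change the argument.
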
 

\begin{figure} 
  \centering
    \includegraphics[width=0.48\textwidth]{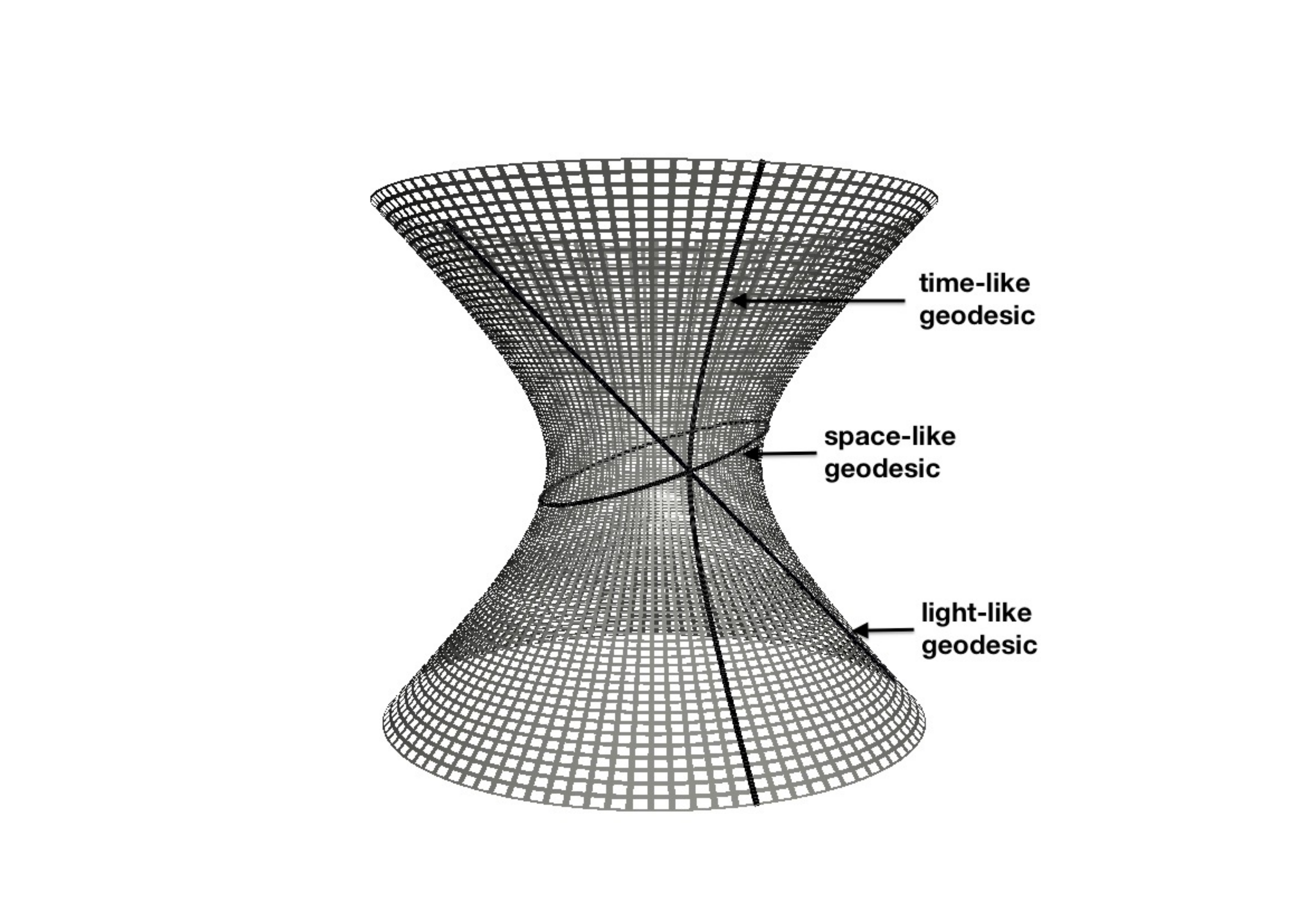}
  \caption{{\sc A pseudosphere with three different kinds of geodesics.}}
  \label{psi_sphere}
\end{figure}

\proofbegin
A straightforward computation shows that $f(t)$ satisfies the initial conditions (\ref{geodesicinitialconditions}) for any $c \in \R$. We can also check that
$$\int_{\S} (f_1(t)^2 - f_2(t)^2) dx = 1$$
for all $t$, showing that $f(t)$ is a curve in $\dS^{s-1}$.
Moreover, for any $c \in \R$, $f$ satisfies the equation
$$f_{tt} + c f = 0.$$ 
Since the tangential and normal parts of $Z = Z^T + Z^N \in T_f H^s(\S; \R^2)$ are given by 
\begin{align*}
  Z^T = Z - \frac{\llangle Z, f\rrangle}{\llangle f, f \rrangle} f, \qquad Z^N = \frac{\llangle Z, f\rrangle}{\llangle f, f \rrangle} f,
\end{align*}
the expression (\ref{sphereconnection}) for the covariant derivative yields
$$\nabla_{f_t} f_t = (f_{tt})^T = -cf^T = 0.$$
This proves that $f(t)$ indeed is the correct geodesic.
\proofend

\begin{rem}
1. The three cases $c > 0$, $c = 0$, $c< 0$ correspond to spacelike, lightlike, and timelike geodesics respectively, see {\rm Figure \ref{psi_sphere}}.

2. Theorems \ref{sphereth} and \ref{geoth} imply that the geodesic $(\varphi(t), \alpha(t))$ in $G^s$ starting at $(\id, 0)$ and with initial velocity $(\varphi_t(0), \alpha_t(0)) = (u_0, \rho_0) \in T_{(\id,0)}G^s$ is given by
$$(\varphi(t), \alpha(t)) = \Phi^{-1}(f(t))$$
where $f(t)$ is given by (\ref{fgeodesicformula}). Since $\Phi^{-1}$ is given by (\ref{Phiinverse}), this immediately yields the explicit formulas in (\ref{phix}).

3. It follows from Theorem \ref{geoth} that all solutions of the Hunter-Saxton system -- except those whose initial data satisfy the condition \eqref{globex} -- break down in finite time. The maximal existence time $T^*$ is the first time for which $f(t)$ hits the boundary of $\mathcal{U}^{s-1}$, i.e., $T^*$ is the first time for which $f_1^2(t,x) - f_2^2(t,x) = 0$ for some $x \in \S$.
\end{rem}

\subsection{A symplectic manifold}
The mean value $\int_{S^1} \rho dx$ of the second component $\rho$ of a solution $(u, \rho)$ of (\ref{2hs}) is a conserved quantity.
Thus, if $\rho$ has zero mean initially, it will have zero mean at all later times. This suggests that we consider the following variation of (\ref{2hs}):
\begin{align}\label{pi2HS}
\begin{cases}
  m_t + u m_x + 2 u_x m + \kappa \pi(\rho) \rho_x = 0, \\ 
    \pi(\rho)_t + (\pi(\rho) u)_x = 0,
\end{cases} \qquad t>0, \  x\in S^1,
\end{align}
where $\pi(\rho) =  \rho - \int_{S^1} \rho dx$ denotes the orthogonal projection onto the subspace of functions in $L_2(\S)$ of zero mean. 
For solutions such that $\int_{S^1} \rho dx = 0$, (\ref{pi2HS}) coincides with (\ref{2hs}).
The system (\ref{pi2HS}) with $\kappa = 1$ was analyzed in \cite{len_kahler}; here we will consider the case of $\kappa = -1$. We will see that the system (\ref{pi2HS}) possesses some interesting geometric properties not shared by (\ref{2hs}). 

Let $H^{s}(S^1)/\R$ denote the space $H^{s}(S^1)$ with two functions being identified iff they differ by a constant; the equivalence class of $\alpha \in H^{s}(S^1)$ will be denoted by $[\alpha] \in H^{s}(S^1)/\R$.
We define $K^s$ as the semidirect product $\Diff_0^s(S^1) \circledS (H^{s-1}(S^1)/\R)$ with multiplication given by 
$$(\varphi, [\alpha])(\psi, [\beta]) = (\varphi\circ\psi, [\beta + \alpha \circ \psi]).$$
We equip $K^s$ with the right-invariant metric given at the identity by
\begin{align}\label{pimetricatid}  
  \langle (u, [\rho]), (v, [\tau]) \rangle_{(\id, [0])} = \frac{1}{4}\int_{S^1} (u_x v_x - \pi(\rho)\pi(\tau)) dx.
\end{align}
Extending the projection $\pi$ to any tangent space by right invariance so that
$$\pi(U_2) = U_2 - \int_{S^1} U_2 \varphi_x dx$$
whenever $(U_1, [U_2]) \in T_{(\varphi, \alpha)}K^s \simeq H_0^s(S^1) \times (H^{s-1}(S^1)/\R)$, we find
\begin{align}\label{pimetric}
\langle (U_1, [U_2]), (V_1, [V_2]) \rangle_{(\varphi, [\alpha])} = \frac{1}{4}\int_{S^1} \left(\frac{U_{1x}V_{1x}}{\varphi_x} - \pi(U_2)\pi(V_2)\varphi_x\right) dx.
\end{align}

We define a connection $\nabla$ on $K^s$ by
$$\nabla_X Y = DY \cdot X - \Gamma_{(\varphi, [\alpha])}(Y, X),$$
where the Christoffel map $\Gamma$ is defined for $u = (u_1, [u_2])$, $v = (v_1, [v_2])$ in $T_{(\id, [0])}K^s$ by
\begin{align*}
\Gamma_{(\id, [0])}(u, v) = - \frac{1}{2}\begin{pmatrix} A^{-1}\partial_x(u_{1x}v_{1x} - \pi(u_2)\pi(v_2)) \\
[u_{1x}\pi(v_2) + v_{1x}\pi(u_2)] \end{pmatrix}
\end{align*}
and extended to the tangent space at $(\varphi, [\alpha]) \in K^s$ by right invariance:
\begin{align*}
\Gamma_{(\varphi, [\alpha])}(u\circ \varphi, v \circ \varphi) = \Gamma_{(\id, [0])}(u, v) \circ \varphi.
\end{align*}
We also define a (1,1)-tensor $J$ and a two-form $\omega$ on $K^s$ by
\begin{align}\label{Jdef}
J_{(\varphi, [\alpha])}(U_1, [U_2]) = \left(-\int_0^x \pi(U_2)\varphi_x dy, -\left[\frac{U_{1x}}{\varphi_x}\right]\right)
\end{align}
and
\begin{align}\label{omegadef}
\omega_{(\varphi, [\alpha])}((U_1, [U_2]), (V_1, [V_2]))
= \frac{1}{4}\int_{S^1}(U_{2x}V_{1} - V_{2x}U_{1}) dx
\end{align}
whenever $(U_1, [U_2]), (V_1, [V_2]) \in T_{(\varphi, [\alpha])}K^s$. 
We note that $\omega$ and $J$ are right-invariant. 

The analogs when $\kappa = 1$ of the following two results were proved in \cite{len_kahler}; the proofs when $\kappa = -1$ proceed along the same lines. The first result establishes several properties of the geometric structure of $K^s$; the second shows that (\ref{pi2HS}) is the geodesic equation on $(K^s, g)$.

\begin{thm}\label{kahlerth}
Let $g$ denote the pseudo-Riemannian metric $\langle \cdot, \cdot \rangle$ on $K^s$. Then the following hold:
\begin{itemize}
\item[(a)] $g$ is a smooth pseudo-Riemannian metric on $K^s$ and $\nabla$ is a smooth connection compatible with $g$.

\item[(b)] $\omega$ is a symplectic form on $K^s$ compatible with $\nabla$, i.e. $\omega$ is a smooth nondegenerate closed two-form on $K^s$ such that $\nabla \omega = 0$.

\item[(c)] $J$ is a smooth $(1,1)$-tensor on $K^s$ such that $J^2 = I$ and $\nabla J = 0$.

\item[(d)] The symplectic form $\omega$, the metric $g$, and the tensor $J$ are compatible in the sense that
$$\omega(U, V) = g(JU, V).$$

\item[(e)] The metric $g$ satisfies 
$$g(U, V) = -g(JU, JV).$$

\item[(f)]  The Nijenhuis-like tensor $N^J$ defined for vector fields $X, Y$ by
$$N^J(X,Y) = [X,Y] - J[JX, Y] - J[X, JY] - [JX, JY]$$
vanishes identically.
\end{itemize}
\end{thm}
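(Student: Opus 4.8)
The plan is to exploit the right-invariance of all four structures $g$, $\nabla$ (through the Christoffel map $\Gamma$), $\omega$, and $J$: every assertion in (a)--(f) is either tensorial or a connection identity, so it suffices to verify each at the identity $(\id, [0])$, where $\varphi_x = 1$, $\alpha = 0$, and $\pi(U_2) = U_2 - \int_{S^1} U_2\,dx$, and then propagate by right invariance. Smoothness of $g$ and $\nabla$ in the Banach-manifold sense follows, exactly as in \cite{len_kahler}, from smoothness of the constituent maps (namely $\varphi \mapsto 1/\varphi_x$ on $\{\varphi_x > 0\}$, the bounded operator $A^{-1}\partial_x$, and composition), while nondegeneracy of $g$ is immediate from \eqref{pimetricatid} once each class $[U_2]$ is represented by its zero-mean representative $\pi(U_2)$.

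First I would dispose of the purely algebraic statements. At the identity $(J U)_1 = -\int_0^x \pi(U_2)\,dy$ and $(J U)_2 = -U_{1x}$, so using $\pi(U_{1x}) = U_{1x}$ (because $\int_{S^1} U_{1x}\,dx = 0$ for $U_1 \in H_0^s$) together with $U_1(0) = 0$, a short computation gives $J^2 = I$, which is the first half of (c). Inserting the same expressions into \eqref{pimetricatid} and \eqref{omegadef}, followed by one integration by parts and the identity $\int_{S^1} U_2 V_{1x}\,dx = \int_{S^1} \pi(U_2) V_{1x}\,dx$, yields both $\omega(U,V) = g(JU,V)$ and $g(JU,JV) = -g(U,V)$, establishing (d) and (e).

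The two substantive steps are the differential identities. I would first observe that $\Gamma_{(\id,[0])}$ is symmetric in its two arguments, so that, since the Lie bracket of vector fields in this setting is $[X,Y] = DY\cdot X - DX\cdot Y$, the connection $\nabla_X Y = DY\cdot X - \Gamma(Y,X)$ is torsion-free. Metric compatibility $\nabla g = 0$, which completes (a), then reduces to the pointwise identity $(D_X g)(Y,Z) + g(\Gamma(X,Y),Z) + g(Y,\Gamma(X,Z)) = 0$, where $D_X g$ is the derivative of the base-point-dependent metric \eqref{pimetric}; this is checked by inserting the explicit $\Gamma$, differentiating the $1/\varphi_x$ and $\varphi_x$ factors, and integrating by parts, the first component relying on the defining relation $A(A^{-1}f) = f$ for $A = -\partial_x^2$. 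The heart of the argument is $\nabla J = 0$ in (c): reducing to the identity and computing $(\nabla_X J)Y = \nabla_X(JY) - J(\nabla_X Y)$ from the explicit formulas for $\Gamma$ and $J$, one must balance the base-point dependence of $J$ (through $\varphi_x$ and the right-invariant $\pi$) against the Christoffel terms and show that everything cancels.

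Finally, the remaining claims are formal consequences. Nondegeneracy of $\omega$ follows from (d), the nondegeneracy of $g$, and the invertibility of $J$; given $\nabla g = 0$ and $\nabla J = 0$, the relation $\omega = g(J\cdot,\cdot)$ forces $\nabla\omega = 0$, and torsion-freeness upgrades this to $d\omega(X,Y,Z) = (\nabla_X\omega)(Y,Z) - (\nabla_Y\omega)(X,Z) + (\nabla_Z\omega)(X,Y) = 0$, so $\omega$ is symplectic and $\nabla$-parallel, giving (b). Likewise, (f) is obtained by reducing $N^J$ to the identity and checking the cancellation directly; conceptually this reflects the integrability of the two eigendistributions of the para-complex structure $J$, which is guaranteed by $\nabla J = 0$ together with the vanishing torsion of $\nabla$. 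I expect the verification of $\nabla J = 0$ to be the main obstacle, with metric compatibility a close second, since every other item is either an identity checked at a single point or a formal corollary of these two.
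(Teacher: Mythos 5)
Your overall strategy---reduce every assertion to the identity element by right-invariance, verify the algebraic identities $J^2 = I$, (d), (e) there by direct computation, establish $\nabla g = 0$ and $\nabla J = 0$ from the explicit Christoffel map, and deduce (b) and (f) formally---is exactly the route the paper takes: its entire proof is the remark that the $\kappa = 1$ analogues were proved in \cite{len_kahler} and that the $\kappa = -1$ case ``proceeds along the same lines,'' and those lines are precisely the computations you outline. Your pointwise verifications of $J^2 = I$, (d) and (e) are correct (the facts $\int_{S^1} U_{1x}\,dx = 0$ and $\int_{S^1} \pi(U_2)V_{1x}\,dx = \int_{S^1} U_2 V_{1x}\,dx$ are indeed the only inputs), and the reduction of (b) to $\nabla g = 0$, $\nabla J = 0$ and vanishing torsion is standard.

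One concrete caveat concerns (f). With the tensor exactly as printed, $N^J(X,Y) = [X,Y] - J[JX,Y] - J[X,JY] - [JX,JY]$, your formal derivation from $\nabla J = 0$ and torsion-freeness does \emph{not} close up: substituting $[A,B] = \nabla_A B - \nabla_B A$ and $\nabla_A(JB) = J\nabla_A B$ leaves $N^J(X,Y) = -2J\left(\nabla_{JX}Y - \nabla_{JY}X\right)$, and indeed the printed expression is not even tensorial in $Y$ (replacing $Y$ by $fY$ produces the extra term $-2\,((JX)f)\,JY$). The para-complex Nijenhuis tensor that \emph{is} annihilated by a torsion-free connection with $\nabla J = 0$ is $J^2[X,Y] + [JX,JY] - J[JX,Y] - J[X,JY]$, i.e.\ the printed formula with the sign of $[JX,JY]$ reversed; this is evidently what is intended, being the $J^2 = I$ analogue of the almost-complex convention of \cite{len_kahler}. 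Your argument is correct for that corrected tensor, but as written your claim that (f) is a formal corollary would fail for the displayed $N^J$; you should either record the sign correction or carry out the direct cancellation at the identity, which would have exposed the discrepancy.
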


\begin{prop}\label{pigeodesicprop}
Let $s > 5/2$. Let $(\varphi, [\alpha]):J \to K^s$ be a $C^2$-curve where $J \subset \R$ is an open interval and define $(u, \rho)$ by (\ref{urhovarphitft}). Then 
\begin{align}\label{piurhoCC1}
& (u, [\rho]) \in C\left([0, T);  H_0^s(S^1) \times (H^{s-1}(S^1)/\R)\right)
	\\ \nonumber
 & \hspace{3cm}\cap C^1\left([0, T); H_0^{s-1}(S^1) \times (H^{s-2}(S^1)/\R)\right)
\end{align}
and $(\varphi, [\alpha])$ is a geodesic if and only if $(u, [\rho])$ satisfies the following weak form of (\ref{pi2HS}):
\begin{align*}
  & \begin{pmatrix} u_t + uu_x \\ \pi(\rho)_t  \end{pmatrix} 
  =  \begin{pmatrix} - \frac{1}{2}A^{-1}\partial_x(u_x^2 - \pi(\rho)^2) \\
-(u\pi(\rho))_x \end{pmatrix}.
\end{align*}
\end{prop}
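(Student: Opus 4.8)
The plan is to verify the geodesic equation $\nabla_{\dot\gamma}\dot\gamma = 0$ directly from the definition of $\nabla$ and then transport it from the group $K^s$ to the Eulerian description. Writing $\gamma(t) = (\varphi(t), [\alpha(t)])$ and recalling that, evaluated along the curve, the term $DY\cdot X$ in $\nabla_X Y = DY\cdot X - \Gamma(Y,X)$ becomes the second $t$-derivative $\gamma_{tt} = (\varphi_{tt}, \alpha_{tt})$, the geodesic equation reads
$$(\varphi_{tt}, \alpha_{tt}) = \Gamma_{(\varphi, [\alpha])}(\gamma_t, \gamma_t).$$
By the right-invariance of the Christoffel map, with $(u, [\rho]) = (\varphi_t\circ\varphi^{-1}, \alpha_t\circ\varphi^{-1})$ I may rewrite the right-hand side as $\Gamma_{(\id,[0])}((u,[\rho]),(u,[\rho]))\circ\varphi$, so that the entire computation is reduced to the explicit Christoffel map at the identity.

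First I would establish the regularity claim (\ref{piurhoCC1}). Since $(\varphi,[\alpha])$ is a $C^2$-curve in $K^s$, both $\varphi_t$ and $\varphi_{tt}$ lie in $H_0^s(\S)$, and $\alpha_t, \alpha_{tt}$ lie in $H^{s-1}(\S)/\R$. Because composition $v\mapsto v\circ\varphi^{-1}$ preserves $H^s$-regularity for $\varphi\in\Diff_0^s(\S)$ with $s>5/2$, the fields $u(t)\in H_0^s(\S)$ and $[\rho(t)]\in H^{s-1}(\S)/\R$ depend continuously on $t$. Differentiating the defining relations $\varphi_t = u\circ\varphi$ and $\alpha_t = \rho\circ\varphi$ in $t$ gives
$$\varphi_{tt} = (u_t + u u_x)\circ\varphi, \qquad \alpha_{tt} = (\rho_t + u\rho_x)\circ\varphi,$$
and solving for $u_t$ exhibits the familiar loss of exactly one derivative (the product $u u_x$ lies only in $H^{s-1}(\S)$), which yields the stated $C^1$-regularity with values in $H_0^{s-1}(\S)\times(H^{s-2}(\S)/\R)$.

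With the regularity in hand, I would substitute the explicit formula for $\Gamma_{(\id,[0])}$, evaluate at $(u,[\rho])$, and compose back with $\varphi^{-1}$. The first component gives immediately
$$u_t + u u_x = -\tfrac{1}{2}A^{-1}\partial_x\bigl(u_x^2 - \pi(\rho)^2\bigr),$$
matching the top row of the claimed weak system. For the second component, $\Gamma_{(\id,[0])}$ contributes $-u_x\pi(\rho)$, so the geodesic equation forces $\rho_t + u\rho_x = -u_x\pi(\rho)$ modulo a spatially constant term (an artifact of the equivalence class $[\cdot]$). The remaining task is to convert this into the transport form $\pi(\rho)_t = -(u\pi(\rho))_x$: since $\pi(\rho)_x = \rho_x$ and the mean $\int_\S \rho\,dx$ is $x$-independent, applying $\pi$ to both sides and tracking the time-derivative of the mean removes the ambiguous constant and produces exactly $-(u\pi(\rho))_x = -u_x\pi(\rho) - u\pi(\rho)_x$.

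I expect the main obstacle to be the careful handling of the zero-mean projection $\pi$ together with the equivalence-class structure of the second factor. Unlike the full system (\ref{2hs}), here $\rho$ is only defined up to a constant, and one must consistently pass between $\rho$ and its zero-mean representative $\pi(\rho)$, verifying that the spatially constant ambiguity cancels precisely when one differentiates the conserved mean in time; the right-invariant extension $\pi(U_2) = U_2 - \int_\S U_2\varphi_x\,dx$ is the bookkeeping device that makes this cancellation transparent. The companion regularity estimates, while standard, also rely on the composition and product inequalities in Sobolev spaces valid for $s>5/2$. As the $\kappa=1$ version is carried out in \cite{len_kahler}, the only genuinely new feature is the sign change turning $+\pi(\rho)^2$ into $-\pi(\rho)^2$ in the Christoffel map, which propagates harmlessly through the entire computation.
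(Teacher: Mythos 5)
Your proposal is correct and follows essentially the same route as the paper, which does not write out this proof but instead refers to the $\kappa=1$ analogue in \cite{len_kahler} and notes that the $\kappa=-1$ case "proceeds along the same lines"; your sketch (reduce to the Christoffel map at the identity via right-invariance, differentiate $\varphi_t=u\circ\varphi$, $\alpha_t=\rho\circ\varphi$ in $t$, and track the zero-mean projection through the equivalence classes) is exactly that standard argument, with the sign change in the $\pi(\rho)^2$ term propagating harmlessly as you say. Your identification of the constant-ambiguity bookkeeping for the second component as the only delicate point is apt and consistent with how the paper sets up $\pi$ and $[\cdot]$.
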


\begin{rem}
   It is clear from Theorem \ref{kahlerth} that the geometric structure of $K^s$ bears many similarities with a K\"ahler manifold (however, the metric $g$ is not positive definite and $J$ is not a complex structure because $J^2 = I \neq -I$).  
\end{rem}

We next compute the curvature of $K^s$.

\begin{thm}
The curvature tensor $R$ on $K^s$ satisfies
\begin{align}\label{Kcurvature}
\langle R(u, v)v, u\rangle
= \langle u, u \rangle\langle v, v\rangle - \langle u, v\rangle^2 - 3\omega(u,v)^2,
\end{align}
where $u = (u_1, [u_2])$ and $v = (v_1, [v_2])$ are elements in $T_{(\id, [0])}K^s$. In particular, the sectional curvature 
$$\sect(u,v) = \frac{\langle R(u, v)v, u\rangle}{\langle u, u \rangle\langle v, v\rangle - \langle u, v\rangle^2}$$
takes on arbitrarily large positive as well as arbitrarily large negative values.
\end{thm}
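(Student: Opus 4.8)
The plan is to compute $\langle R(u,v)v,u\rangle$ at the identity $(\id,[0])$ and then appeal to right-invariance. Since $g$, $\nabla$, $\omega$ and $J$ are all right-invariant (Theorem \ref{kahlerth}), the quantity $\langle R(u,v)v,u\rangle$ is a right-invariant function, so it suffices to evaluate it on the Lie algebra $\mathfrak{k}=T_{(\id,[0])}K^s$. I would extend $u=(u_1,[u_2])$ and $v=(v_1,[v_2])$ to right-invariant vector fields $X_u,X_v$ and use $R(X_u,X_v)X_v=\nabla_{X_u}\nabla_{X_v}X_v-\nabla_{X_v}\nabla_{X_u}X_v-\nabla_{[X_u,X_v]}X_v$. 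With $\nabla_XY=DY\cdot X-\Gamma(Y,X)$, where $\Gamma$ is symmetric and right-invariant, evaluating the derivative terms $D(\cdot)\cdot(\cdot)$ at the identity reduces $R(u,v)v$ to a fixed algebraic expression built from the Christoffel map $\Gamma_{(\id,[0])}$ and the semidirect-product bracket on $\mathfrak{k}$ --- precisely the scheme used for $\kappa=1$ in \cite{len_kahler}. The first concrete step is thus to record this bracket and to substitute the explicit $\Gamma_{(\id,[0])}$ stated before \eqref{Jdef}.

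The core of the proof is then to insert this $\Gamma$ into the nested terms $\Gamma(u,\Gamma(v,v))-\Gamma(v,\Gamma(u,v))$ and the bracket contribution, pair the outcome against $u$ with the metric \eqref{pimetric}, and integrate by parts. Two features demand care: the nonlocal operator $A^{-1}\partial_x$ in the first slot of $\Gamma$, which I would tame using its explicit form \eqref{Ainv} and integration by parts so that every term becomes a genuine $L_2$-pairing, and the zero-mean projection $\pi$, which must be carried consistently through all terms. I expect the purely pointwise contributions to assemble into $\langle u,u\rangle\langle v,v\rangle-\langle u,v\rangle^2$, while the terms generated by the $A^{-1}\partial_x$ nonlocality and by $\pi$ collapse, through the definition \eqref{omegadef} of $\omega$, into a multiple of $\omega(u,v)^2$. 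Obtaining exactly the coefficient $-3$ is the delicate point and the main obstacle: as in the computation of a Kähler space form, the factor $3$ emerges from the interplay of the two $\Gamma\circ\Gamma$ terms with the $\nabla_{[u,v]}$ term, and the minus sign reflects the indefiniteness of the metric (the $\kappa=1$ computation of \cite{len_kahler} yields $+3\omega(u,v)^2$).

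As a conceptual check --- and as an alternative proof --- I would observe that $(\varphi,\alpha)\mapsto(\varphi,[\alpha])$ exhibits $K^s$ as the base of a pseudo-Riemannian submersion $G^s\to K^s$ whose fibers are the timelike lines obtained by adding a constant to $\alpha$; on the horizontal distribution $\{\int_{\S}U_2\varphi_x\,dx=0\}$ the metric \eqref{metric} restricts to \eqref{pimetric}. Since $G^s$ has constant sectional curvature $1$ by Corollary \ref{curvcor}, O'Neill's formula $\langle R^{K}(u,v)v,u\rangle=\langle R^{G}(\tilde u,\tilde v)\tilde v,\tilde u\rangle+3\llangle A_{\tilde u}\tilde v,A_{\tilde u}\tilde v\rrangle$ immediately gives $\langle u,u\rangle\langle v,v\rangle-\langle u,v\rangle^2$ from the total space plus $3$ times the self-pairing of the O'Neill tensor $A_{\tilde u}\tilde v=\tfrac12\mathcal{V}[\tilde u,\tilde v]$. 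Because the fibers are timelike, a short computation identifies this self-pairing with $-\omega(u,v)^2$, reproducing \eqref{Kcurvature} together with a transparent explanation of both the factor $3$ and the minus sign; the price of this route is justifying O'Neill's identity in the infinite-dimensional pseudo-Riemannian setting.

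Finally, the statement on sectional curvature is an immediate consequence of \eqref{Kcurvature}: dividing by the Gram determinant gives
$$\sect(u,v)=1-\frac{3\,\omega(u,v)^2}{\langle u,u\rangle\langle v,v\rangle-\langle u,v\rangle^2}.$$
Since the metric is indefinite, the denominator is not sign-definite and the $2$-plane spanned by $u,v$ can be made nearly degenerate from either side. I would therefore exhibit two explicit one-parameter families of pairs $(u,v)$ --- built from a few Fourier modes in the components $u_1,v_1$ and $u_2,v_2$ --- for one of which $\langle u,u\rangle\langle v,v\rangle-\langle u,v\rangle^2\to0^{+}$ while $\omega(u,v)$ stays bounded away from $0$, forcing $\sect\to-\infty$, and for the other of which the Gram determinant tends to $0^{-}$, forcing $\sect\to+\infty$. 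This establishes that the sectional curvature is unbounded above and below.
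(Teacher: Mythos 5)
Your ``alternative proof'' is in fact the paper's actual proof: the argument given there verifies that $\projK:(\varphi,\alpha)\mapsto(\varphi,[\alpha])$ is a semi-Riemannian submersion with vertical space $\{(0,U_2): U_2 \text{ constant}\}$ (timelike, since $\kappa=-1$) and horizontal space $\{\pi(U_2)=U_2\}$, invokes O'Neill's formula together with the constant curvature $1$ of $G^s$ from Corollary \ref{curvcor}, computes $[\bar u,\bar v]^v=\bigl(0,\int_{\S}(v_{2x}u_1-u_{2x}v_1)dx\bigr)$, and identifies $\frac{3}{4}\langle[\bar u,\bar v]^v,[\bar u,\bar v]^v\rangle_{G^s}=-3\omega(u,v)^2$ exactly as you indicate --- including the point you flag, that O'Neill's identity must be justified for (weak) pseudo-Riemannian Banach manifolds, for which the paper cites Lang. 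Your closing argument for unboundedness of the sectional curvature is also the paper's: an explicit low-Fourier-mode family $(u_a,v)$ is produced with $\langle u_a,u_a\rangle\langle v,v\rangle-\langle u_a,v\rangle^2=-3a/256\to 0^{\pm}$ while $\omega(u_a,v)=1/16$ is fixed, so $\sect(u_a,v)=1+1/a\to\pm\infty$. The route you present as primary --- a direct computation from the explicit Christoffel map, pushing $A^{-1}\partial_x$ and the projection $\pi$ through the nested terms --- is a legitimate but much heavier alternative (the paper carries out an analogous brute-force curvature computation only for $G^s$, in Appendix A, where there is no $\omega$-term to track); its one genuinely delicate step, extracting the coefficient $-3$, is precisely what the submersion argument delivers for free. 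So the proposal is correct, and its second route coincides with the paper's proof.
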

\proofbegin
We claim that the natural projection $\projK:G^s \to K^s$ defined by
\begin{align}\label{GtoKprojection}
\projK(\varphi, \alpha) = (\varphi, [\alpha])	
\end{align}
is a semi-Riemannian submersion.\footnote{Recall that a smooth submersion $F$ from $M$ to $N$, where $M$ and $N$ are (possibly weak) pseudo-Riemannian manifolds, is a {\it semi-Riemannian} (or {\it pseudo-Riemannian}) submersion if the restriction of $T_qF$ to the horizontal subspace $(\ker T_qF)^\perp \subset T_qM$ is an isometry onto $T_qN$ for each $q \in M$.
}
Indeed, smoothness of $\projK$ is immediate, and for each $(\varphi, \alpha) \in G^s$, $\projK$ determines the splitting
$$T_{(\varphi, \alpha)}G = (T_{(\varphi, \alpha)}G)^{v} \oplus (T_{(\varphi, \alpha)}G)^{h},$$
where the vertical and horizontal subspaces are defined by
$$(T_{(\varphi, \alpha)}G)^{v} := \ker{T_{(\varphi, \alpha)}\projK} = \{(0, U_2)\; |\; U_2 \text{ is a constant function}\},$$
and
$$(T_{(\varphi, \alpha)}G)^{h}  := (\ker{T_{(\varphi, \alpha)}\projK})^\perp = \{(U_1, U_2)\; |\; \pi(U_2) = U_2\},$$
respectively. The orthogonal projections onto the vertical and horizontal subspaces are given by
$$(U_1, U_2) \mapsto (U_1, U_2)^{v} = \left(0, \int_{S^1} U_2 \varphi_x dx\right)$$
and
\begin{align}\label{horliftU}
  (U_1, U_2) \mapsto (U_1, U_2)^{h} = (U_1, \pi(U_2)),
\end{align}
respectively. Let $U^{h} = (U_1, U_2)$ and $V^{h} = (V_1, V_2)$ be horizontal vectors in $T_{(\varphi, \alpha)}G^s$. Then, since $T\projK(U_1, U_2) = (U_1, [U_2])$, we have
\begin{align*}
\langle U^{h}, V^{h} \rangle_{(\varphi, \alpha)}
&= \frac{1}{4} \int_{S^1} \left(\frac{U_{1x}V_{1x}}{\varphi_x} - \pi(U_2)\pi(V_2)\varphi_x\right)dx
	\\
&= \langle (U_1, [U_2]), (V_1, [V_2]) \rangle_{(\varphi, [\alpha])}
	\\
&=  \langle T\projK(U^{h}), T\projK(V^{h}) \rangle_{(\varphi, [\alpha])},
\end{align*}
showing that $\projK$ is a semi-Riemannian submersion.

O'Neill's formula for semi-Riemannian submersions (see \cite{oneill} p. 213; the formula generalizes to Banach manifolds cf. \cite{L1995} p. 394) implies that
$$\langle R(X,Y)Y, X \rangle_{K^s} = \langle R_{G^s}(\bar{X}, \bar{Y})\bar{Y}, \bar{X} \rangle_{G^s} 
+ \frac{3}{4}\bigl\langle \bigl[\bar{X}, \bar{Y}\bigr]^{v}, \bigl[\bar{X}, \bar{Y}\bigr]^{v} \bigr\rangle_{G^s},$$
where $\bar{X}, \bar{Y}$ denote the horizontal lifts of two vector fields $X, Y$ on $K^s$ and $R_{G^s}$ denotes the curvature tensor on $G^s$.
In view of Corollary \ref{curvcor} and equation (\ref{horliftU}) this yields
\begin{align*}
\langle R(u,v)v, u \rangle_{K^s} & = \langle \bar{u}, \bar{u} \rangle_{G^s} \langle \bar{v}, \bar{v} \rangle_{G^s}
 - \langle \bar{u}, \bar{v}\rangle_{G^s}^2  + \frac{3}{4} \bigl\langle [\bar{u}, \bar{v})]^{v}, [\bar{u}, \bar{v})]^{v}\bigr\rangle_{G^s}
	\\
& =\langle u, u \rangle_{K^s} \langle v, v\rangle_{K^s} - \langle u, v\rangle_{K^s}^2  
+ \frac{3}{4} \bigl\langle [(u_1, \pi(u_2)), (v_1, \pi(v_2))]^{v}, [(u_1, \pi(u_2)), (v_1, \pi(v_2))]^{v}\bigr\rangle_{G^s},
\end{align*}
whenever $u = (u_1, [u_2])$ and $v = (v_1, [v_2])$ are elements of $T_{(\id, [0])}K^s$.
Since 
$$[(u_1, \pi(u_2)), (v_1, \pi(v_2))]^{v} 
= \begin{pmatrix} v_{1x}u_1 - u_{1x}v_1 \\ v_{2x}u_1 - u_{2x}v_1 \end{pmatrix}^{v}
= \begin{pmatrix} 0 \\ \int_{S^1}(v_{2x}u_1 - u_{2x}v_1)dx\end{pmatrix},$$
we find (\ref{Kcurvature}).

Let
$$v = \left( - \frac{\cos(6\pi x)}{6\pi}, \biggl[\frac{1}{2}\sin(2\pi x)\biggr] \right) \in T_{(\id, [0])}K^s$$
and define for every $|a| < 1$ the vector $u_a \in T_{(\id, [0])}K^s$ by
$$u_a = \left(-\frac{\cos(2\pi x)}{2\pi}, \left[\sqrt{1 + a} \sin(4\pi x)\right] \right).$$
A computation shows that
$$\langle u_a, u_a \rangle\langle v, v\rangle - \langle u_a, v\rangle^2 = -\frac{3a}{256}, \qquad
\omega(u_a,v) = \frac{1}{16},$$
and so
$$\sec(u_a,v) = 1 - 3 \frac{\omega(u_a,v)^2}{\langle u_a, u_a \rangle\langle v, v\rangle - \langle u_a, v\rangle^2}
= 1 + \frac{1}{a} \to \pm \infty \quad \text{as} \quad a \to 0^\pm.
$$
This shows that the sectional curvature is unbounded both above and below.
\proofend

\subsection{The quotient space $\mathcal{V}^s$}
In the remainder of this section, we will explore how the geometry of $K^s$ can be understood in terms of the isometry $\Phi$ of Theorem \ref{sphereth}.
We will first show that the pseudosphere $\dS^s$ admits a large group of isometries parametrized by $\beta \in H^s(\S)$. Each isometry $\Lambda_\beta$ in this group is an infinite-dimensional generalization of a Lorentz transformation (or of a hyperbolic rotation) with rapidity for each $x \in \S$ specified by $\beta(x)$.

\begin{prop}\label{Lorentzprop}
For any function $\beta \in H^s(\S)$, the infinite-dimensional Lorentz transformation $\Lambda_\beta:\dS^s \to \dS^s$ defined by
\begin{align}\label{Lorentzdef}
\Lambda_\beta:\begin{pmatrix} f_1 \\ f_2 \end{pmatrix} \mapsto \begin{pmatrix} \cosh{\beta} & -\sinh{\beta} \\
-\sinh{\beta} & \cosh{\beta} \end{pmatrix}
\begin{pmatrix} f_1 \\ f_2 \end{pmatrix}
\end{align}
is a diffeomorphism and an isometry of $\dS^s$ which leaves the subset $\mathcal{U}^s \subset \dS^s$ defined in (\ref{Usdef}) invariant.
\end{prop}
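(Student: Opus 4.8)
The plan is to verify, in order, the three claimed properties: that $\Lambda_\beta$ maps $\dS^s$ to itself, that it is a diffeomorphism, and that it is an isometry preserving $\mathcal{U}^s$. I would begin with the mapping property. Writing $g_1 = \cosh\beta\, f_1 - \sinh\beta\, f_2$ and $g_2 = -\sinh\beta\, f_1 + \cosh\beta\, f_2$, the pointwise hyperbolic identity $\cosh^2\beta - \sinh^2\beta = 1$ gives
\begin{align*}
g_1^2 - g_2^2 = f_1^2 - f_2^2
\end{align*}
at every $x \in \S$. Integrating over $\S$ shows $\int_\S (g_1^2 - g_2^2)\,dx = \int_\S (f_1^2 - f_2^2)\,dx = 1$, so $\Lambda_\beta(f) \in \dS$. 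Since $\beta \in H^s(\S)$ and $H^s$ is a Banach algebra for $s > 5/2$ (and in particular closed under the smooth maps $\cosh,\sinh$ and under multiplication), the components $g_1, g_2$ remain of class $H^s$, so $\Lambda_\beta$ indeed maps $\dS^s$ into $\dS^s$.

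For the diffeomorphism claim, the key observation is that $\Lambda_\beta$ is \emph{linear} in $(f_1, f_2)$ for fixed $\beta$, with inverse obtained simply by replacing $\beta$ by $-\beta$; that is, $\Lambda_{-\beta} = \Lambda_\beta^{-1}$, as one checks by multiplying the two matrices and using the hyperbolic identities. Both $\Lambda_\beta$ and $\Lambda_{-\beta}$ are restrictions to $\dS^s$ of bounded linear (hence smooth) maps on $H^s(\S;\R^2)$, so $\Lambda_\beta$ is a smooth bijection of $\dS^s$ with smooth inverse.

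For the isometry property, I would compute the differential. Because $\Lambda_\beta$ is the restriction of a linear map, its tangent map at any $f$ is that same matrix acting on a tangent vector $X = (X_1, X_2)$. Then the pointwise computation for the indefinite product mirrors the mapping-property identity: writing $T\Lambda_\beta(X) = (Y_1, Y_2)$ with $Y_1 = \cosh\beta\, X_1 - \sinh\beta\, X_2$ and $Y_2 = -\sinh\beta\, X_1 + \cosh\beta\, X_2$, one finds $Y_1 Z_1 - Y_2 Z_2 = X_1 Z_1 - X_2 Z_2$ for a second tangent vector $Z$, again via $\cosh^2\beta - \sinh^2\beta = 1$. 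Integrating gives $\llangle T\Lambda_\beta X, T\Lambda_\beta Z \rrangle = \llangle X, Z \rrangle$, so $\Lambda_\beta$ preserves the metric. Finally, invariance of $\mathcal{U}^s$ follows from the definition (\ref{Usdef}): the condition $f_1^2 - f_2^2 > 0$ is preserved because $g_1^2 - g_2^2 = f_1^2 - f_2^2$ pointwise, and the condition $f_1 > 0$ together with $f_1^2 > f_2^2$ forces $|f_2/f_1| < 1$, whence $g_1 = f_1(\cosh\beta - \sinh\beta\, f_2/f_1) > 0$ since $\cosh\beta > |\sinh\beta| \ge |\sinh\beta|\,|f_2/f_1|$.

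The computations are all routine consequences of the single hyperbolic identity, so there is no serious analytic obstacle; the only point requiring care is the regularity bookkeeping in the first step—ensuring that composing with $\cosh\beta,\sinh\beta$ and multiplying by $H^s$ functions keeps everything in $H^s$—which is handled by the Banach algebra property of $H^s(\S)$ for $s > 5/2$.
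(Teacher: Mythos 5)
Your proposal is correct and follows essentially the same route as the paper: the core step in both is the verification that $\Lambda_\beta$, being the restriction of a linear map, preserves $\llangle \cdot,\cdot \rrangle$ via the pointwise identity $\cosh^2\beta - \sinh^2\beta = 1$. The paper dismisses the invariance of $\mathcal{U}^s$ and the diffeomorphism property as straightforward, whereas you supply those details explicitly (in particular the positivity of $g_1$ via $\cosh\beta > |\sinh\beta|\,|f_2/f_1|$ and the inverse $\Lambda_{-\beta}$); these additions are accurate and consistent with what the paper intends.
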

\proofbegin
The invariance of $\mathcal{U}^s$ follows by a straightforward computation, so it is enough to show that $\Lambda_\beta$ viewed as a linear operator on $H^s(\S; \R^2)$, preserves the metric $\llangle \cdot, \cdot \rrangle$. This is easily verified:
\begin{align*}
   \llangle \Lambda_\beta (f_1, f_2) , \Lambda_\beta (g_1, g_2) \rrangle
  = & \;  \int_{\S}\biggl\{ (f_1 \cosh{\beta} - f_2 \sinh{\beta})(g_1 \cosh{\beta} - g_2 \sinh{\beta})
	\\
&\hspace{1cm} - (-f_1 \sinh{\beta} + f_2  \cosh{\beta} )(-g_1 \sinh{\beta} + g_2  \cosh{\beta} )\biggr\} dx
 	\\
= &\;  \int_{\S} (f_1 g_1 - f_2 g_2) dx
= \llangle  (f_1, f_2) , (g_1, g_2) \rrangle.
\end{align*}
\proofend

Assuming that $\beta(x) = \beta \in \R$ is a constant function, Proposition \ref{Lorentzprop} implies that there is a natural action of $\R$ on $\dS^s$ given by 
\begin{align}\label{actiononSs}
  (\beta, f) \mapsto \Lambda_\beta f, \qquad \beta \in \R, \quad f \in \dS^s.
\end{align}  
Under the isometry $\Phi$ of Theorem \ref{sphereth}, this action corresponds to the following action of $\R$ on $G^s$:
\begin{align}\label{actiononGs}
(\beta, (\varphi, \alpha)) \mapsto (\varphi, \alpha - 2\beta), \qquad \beta \in \R, \quad (\varphi, \alpha) \in G^s.
\end{align}
Indeed,
\begin{align*}
\Lambda_\beta \Phi(\varphi, \alpha)
& = \begin{pmatrix} \cosh{\beta} & -\sinh{\beta} \\
-\sinh{\beta} & \cosh{\beta} \end{pmatrix}
\sqrt{\varphi_x} \begin{pmatrix} \cosh(\alpha/2) \\ \sinh(\alpha/2) \end{pmatrix}
= 	\sqrt{\varphi_x}\begin{pmatrix} \cosh(\frac{\alpha}{2} - \beta) \\ \sinh(\frac{\alpha}{2} - \beta) \end{pmatrix}
 = \Phi(\varphi, \alpha - 2\beta).
\end{align*}
The quotient space of $G^s$ under the action (\ref{actiononGs}) is exactly the symplectic manifold $K^s$. Thus, under the isomorphism $\Phi$, $K^s$ corresponds to the quotient space $\mathcal{V}^s$ defined by $\mathcal{V}^s = \mathcal{U}^s/\R$, where two elements $f, \tilde{f} \in \mathcal{U}^s$ are identified iff there exists a $\beta \in \R$ such that $\Lambda_\beta f = \tilde{f}$. The metric $\llangle \cdot, \cdot \rrangle$ on $\dS^s$ induces a metric on $\mathcal{V}^s$ and we have the following result.

\begin{thm}
Define $\Psi:K^s \to \mathcal{V}^{s-1}$ by
$$\Psi(\varphi, [\alpha]) \mapsto \left[\Phi(\varphi, \alpha)\right],$$
and let $\projK:G^s \to K^s$ and $\projV:\mathcal{U}^s \to \mathcal{V}^s$ denote the natural quotient maps. 
Then we have the following commutative diagram:
\begin{align}\label{commutingdiagram}
\xymatrix{\ar[d]_{\projK} G^s \ar[r]^{\Phi} & \mathcal{U}^{s-1} \ar[d]^{\projV} \\
K^s \ar[r]^{\Psi} & \mathcal{V}^{s-1}}
\end{align}
where both $\Phi$ and $\Psi$ are bijective isometries, and the actions of $\R$ on $G^s$ and $\mathcal{U}^{s-1}$ given by (\ref{actiononGs}) and (\ref{actiononSs}) are equivariant with respect to $\Phi$.
\end{thm}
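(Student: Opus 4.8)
The plan is to reduce every assertion to the fact, already in hand from Theorem \ref{sphereth}, that $\Phi$ is a bijective isometry, and to use the commuting square to transport this information to the quotients. I would first note that equivariance of $\Phi$ with respect to the actions (\ref{actiononGs}) and (\ref{actiononSs}) is nothing but the identity $\Lambda_\beta \Phi(\varphi, \alpha) = \Phi(\varphi, \alpha - 2\beta)$ already displayed before the theorem, so no new argument is needed there. This same identity gives well-definedness of $\Psi$: replacing $\alpha$ by $\alpha + c$ with $c \in \R$ changes $\Phi(\varphi, \alpha)$ into $\Lambda_{-c/2}\Phi(\varphi, \alpha)$, which lies in the same $\R$-orbit, so $[\Phi(\varphi, \alpha)]$ is unchanged in $\mathcal{V}^{s-1}$. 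Commutativity of (\ref{commutingdiagram}) is then immediate, since $\projV(\Phi(\varphi, \alpha)) = [\Phi(\varphi, \alpha)] = \Psi(\varphi, [\alpha]) = \Psi(\projK(\varphi, \alpha))$.

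Bijectivity of $\Psi$ follows from bijectivity and equivariance of $\Phi$. Surjectivity holds because every class in $\mathcal{V}^{s-1}$ is $[f]$ with $f = \Phi(\varphi, \alpha)$, giving $\Psi(\varphi, [\alpha]) = [f]$. For injectivity, $[\Phi(\varphi, \alpha)] = [\Phi(\psi, \gamma)]$ means $\Lambda_\beta \Phi(\varphi, \alpha) = \Phi(\psi, \gamma)$ for some $\beta \in \R$; equivariance rewrites the left-hand side as $\Phi(\varphi, \alpha - 2\beta)$, and injectivity of $\Phi$ then forces $\psi = \varphi$ and $\gamma = \alpha - 2\beta$, i.e. $(\varphi, [\alpha]) = (\psi, [\gamma])$ in $K^s$.

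The real content is that $\Psi$ is an isometry, which I would obtain by viewing both vertical arrows as semi-Riemannian submersions: $\projK$ was already shown to be one, and the metric on $\mathcal{V}^{s-1}$ induced from $\dS^{s-1}$ is, by the standard quotient construction, the one making $\projV$ a semi-Riemannian submersion (legitimate since the $\R$-action is by isometries by Proposition \ref{Lorentzprop}). Given $\xi, \eta \in T_{(\varphi, [\alpha])}K^s$, I would lift them to horizontal vectors $\bar\xi, \bar\eta$ for $\projK$ and chain the equalities $\langle \xi, \eta\rangle_{K^s} = \langle \bar\xi, \bar\eta\rangle_{G^s} = \llangle T\Phi\,\bar\xi, T\Phi\,\bar\eta\rrangle = \langle T\Psi\,\xi, T\Psi\,\eta\rangle_{\mathcal{V}^{s-1}}$, where the first and last equalities are the defining property of a semi-Riemannian submersion, the middle one is Theorem \ref{sphereth}, and the relation $T\projV \circ T\Phi = T\Psi \circ T\projK$ obtained by differentiating the square identifies $T\Psi\,\xi$ with $T\projV(T\Phi\,\bar\xi)$.

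The one step needing care---the main obstacle---is the final equality, which requires $T\Phi\,\bar\xi$ and $T\Phi\,\bar\eta$ to be \emph{horizontal} for $\projV$ before the submersion property of $\projV$ can be invoked. This is exactly where equivariance pays off: since $\Phi$ maps each $\R$-orbit in $G^s$ onto an $\R$-orbit in $\mathcal{U}^{s-1}$, its differential carries the vertical space $\ker T\projK$ isomorphically onto $\ker T\projV$, and because $\Phi$ is an isometry it must send the $\llangle \cdot, \cdot \rrangle$-orthogonal complement of the former onto that of the latter, i.e. horizontal vectors to horizontal vectors. I would confirm this concretely by differentiating the orbit map $\beta \mapsto (\varphi, \alpha - 2\beta)$ and checking that $T\Phi$ sends its generator to the generator of $\beta \mapsto \Lambda_\beta \Phi(\varphi, \alpha)$, so that the vertical subspaces correspond under $T\Phi$.
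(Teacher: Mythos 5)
Your proposal is correct and follows exactly the route the paper intends: the paper states this theorem without a separate proof, relying on the equivariance identity $\Lambda_\beta\Phi(\varphi,\alpha)=\Phi(\varphi,\alpha-2\beta)$ displayed immediately beforehand and on the semi-Riemannian submersion structure of $\projK$ established in the curvature computation for $K^s$. Your write-up simply supplies the details the authors leave implicit, and your attention to the one nontrivial point---that $T\Phi$ carries $\projK$-horizontal vectors to $\projV$-horizontal vectors because it matches vertical distributions (by equivariance) and preserves orthogonal complements (by isometry)---is exactly the right place to put the care.
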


The space $\mathcal{V}^{s}$ has an interesting geometric structure. In Appendix \ref{Bapp} we cast light on this structure by studying the finite-dimensional analog of $\mathcal{V}^{s}$.

\pagebreak

\section{Global weak solutions} \label{main} \nequation
Before constructing weak solutions, we briefly outline a convenient setting for them. 
\subsection{Preliminaries}
Let us define a bilinear operator on $T_{(\id, 0)}G^s \simeq H^s_0(\S) \times H^{s - 1}(\S)$ by 
\begin{equation} \label{gami0}
\Gamma_{(\id,0)} ((u,\rho),(v,\sigma))= \binom{\Gamma^0_{\id} (u,v) +  \frac{1}{2} A^{-1}\partial_x (\rho \sigma)}{-\frac{1}{2} (u_x \sigma + v_x \rho)}, 
\end{equation}
where $\Gamma^0_{\id}(u,v) = -\frac{1}{2} A^{-1} \partial_x (u_x v_x)$ is the Christoffel operator associated with the Hunter-Saxton equation \cite{lenells} and the inverse of $A = -\partial_x^2$ is given by (\ref{Ainv}).
We extend the bilinear operator $\Gamma_{(\id,0)}$ by right invariance to any tangent space $T_{(\varphi, \alpha)}G^s$:
\begin{equation} \label{gam phi}
\Gamma_{(\varphi,\alpha)} (U,V) = \Gamma_{(\id,0)} (U \circ \varphi^{-1}, V \circ \varphi^{-1} ) \circ \varphi, \qquad (\varphi,\alpha) \in G^s, \quad U, V \in T_{(\varphi ,\alpha)}G^s.
\end{equation}
The associated covariant derivative $\nabla$ is defined by
$$
(\nabla_X Y) (\varphi,\alpha) = DY(\varphi,\alpha)\cdot X(\varphi,\alpha) - \Gamma_{(\varphi,\alpha)} (Y(\varphi,\alpha),X(\varphi,\alpha)). 
$$
Finally, by definition, a geodesic in $G^s$ with respect to $\nabla$ is a $C^2$ curve $(\varphi(t),f(t)) \in G^s$ such that
\begin{equation} \label{strong_geod}
(\varphi_{tt}, \alpha_{tt}) = \Gamma_{(\varphi,\alpha)} ((\varphi_t, \alpha_t), (\varphi_t, \alpha_t)). 
\end{equation}

\subsection{Weak geodesic flow}
A weak formulation of the pseudo-Riemannian geodesic equation can be achieved in the framework of the space $\mathcal{M}_{AC} :=M^{AC} \circledS L_2(\S)$ (see \cite{Wun11b}), where $M^{AC}$ is the set of nondecreasing absolutely continuous functions $\varphi: [0,1] \rightarrow [0,1]$ with $\varphi(0) = 0$ and $\varphi(1) = 1$ \cite{len:weak}. 
The tangent space at the identity can be naturally defined (cf. \cite{EE}, page 8) as
$$
T_{(\id,0)} \; \mathcal{M}_{AC} := H_0^1(\S) \times L_2(\S); 
$$
this definition extends by right invariance to the tangent space at any $(\varphi,\alpha)$: 
\begin{equation} \label{rtang}
T_{(\varphi,\alpha)} \; \mathcal{M}_{AC} := \{ (u \circ \varphi,\rho \circ \varphi) : \; (u,\rho) \in T_{(\id,0)}\; \mathcal{M}_{AC} \}. 
\end{equation} 
These tangent spaces can be characterized as follows. 

\begin{lem} \label{character}
Let $AC(\S)$ denote the set of absolutely continuous functions $\mathbb{S} \to \R$. Let $\varphi \in AC(\mathbb{S})$ and write 
\begin{equation} \label{null}
N := \{ x \in \S:\; \varphi_x(x) \mbox{ exists and equals } 0 \}. 
\end{equation}
Then we have the characterization 
\begin{eqnarray*} 
&&T_{ ( \varphi, 0) } \; \mathcal{M}_{AC} = 
\left \{ (U,R) \in AC(\S) \times L_2(\S) :  \; U(0) = 0, \; U_x = 0 \; \mbox{ a.e. on } N,\right. \\
&& \left.  \qquad \qquad \quad \quad \;
 \int_{ \S \setminus N} \frac{U_x^2}{\varphi_x}dx < \infty, \mbox{ and } 
 \int_{\S} R^2 \varphi_x dx < \infty \nonumber
 \right \}. 
\end{eqnarray*} 
Furthermore, for any $(U,R),\;(V,S) \in T_{(\varphi,\alpha)} \mathcal{M}_{AC}$, 
\begin{equation} \label{pseudoMac}
\left\langle (U,R),(V,S) \right\rangle_{(\varphi,\alpha)}= \frac{1}{4} \int_{\S \setminus N} \biggl(\frac{U_x V_x}{\varphi_x} - R\;S\;\varphi_x \biggr)\;dx. 
\end{equation}
\end{lem}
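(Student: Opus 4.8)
The plan is to establish the two inclusions separately and then read the metric formula (\ref{pseudoMac}) off from the right-invariance of the metric. Write $\mathcal{T}$ for the set on the right-hand side of the claimed characterization. By the defining relation (\ref{rtang}), a tangent vector in $T_{(\varphi,0)}\mathcal{M}_{AC}$ is a pair $(U,R)=(u\circ\varphi,\rho\circ\varphi)$ with $(u,\rho)\in H_0^1(\S)\times L_2(\S)$, so the two components may be treated in parallel: the first (diffeomorphism) component is handled exactly as in Lenells' analysis of the weak Hunter-Saxton flow \cite{len:weak}, while the second (density) component is the genuinely new ingredient. The two analytic tools used throughout are the chain rule for a composition $w\circ\varphi$ of an absolutely continuous $w$ with a monotone absolutely continuous $\varphi$, namely $(w\circ\varphi)_x=(w_x\circ\varphi)\,\varphi_x$ almost everywhere (with the convention that the right-hand side is $0$ wherever $\varphi_x=0$), and the change-of-variables identity $\int_\S (g\circ\varphi)\,\varphi_x\,dx=\int_\S g\,dy$ valid for nonnegative measurable $g$, which itself rests on the fact that $|\varphi(E)|=\int_E\varphi_x\,dx$ for monotone absolutely continuous $\varphi$.

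For the inclusion $T_{(\varphi,0)}\mathcal{M}_{AC}\subseteq\mathcal{T}$, I would start from $(U,R)=(u\circ\varphi,\rho\circ\varphi)$. Since $u\in H_0^1(\S)$ is absolutely continuous with $u(0)=0$ and $\varphi$ is monotone and absolutely continuous, $U=u\circ\varphi$ is absolutely continuous with $U(0)=u(\varphi(0))=u(0)=0$; the chain rule gives $U_x=(u_x\circ\varphi)\,\varphi_x$, so $U_x=0$ almost everywhere on $N$, and the change-of-variables identity yields
\[
\int_{\S\setminus N}\frac{U_x^2}{\varphi_x}\,dx=\int_{\S\setminus N}(u_x\circ\varphi)^2\,\varphi_x\,dx=\int_\S u_x^2\,dy=\|u_x\|_{L_2(\S)}^2<\infty.
\]
For the density component the same change of variables gives the defining bound $\int_\S R^2\varphi_x\,dx=\int_\S(\rho\circ\varphi)^2\varphi_x\,dx=\|\rho\|_{L_2(\S)}^2<\infty$; the remaining membership $R\in L_2(\S)$ simply records that $R$ is the velocity of the $L_2$-valued second factor.

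For the reverse inclusion $\mathcal{T}\subseteq T_{(\varphi,0)}\mathcal{M}_{AC}$, I must reconstruct $u$ and $\rho$ from a given $(U,R)\in\mathcal{T}$. The first component follows \cite{len:weak}: because $U_x=0$ almost everywhere on $N$ and $U$ is absolutely continuous, $U$ is constant on every interval on which $\varphi$ is constant, so $u(\varphi(x)):=U(x)$ is a well-defined function on the range of $\varphi$; extending it to be affine on the complementary gaps produces $u\in H_0^1(\S)$ with $\|u_x\|_{L_2}^2=\int_{\S\setminus N}U_x^2/\varphi_x\,dx$ and $u\circ\varphi=U$. The density component is the crux: since $R$ need not be constant on the flat parts $N$ of $\varphi$, one cannot set $\rho(\varphi(x))=R(x)$ directly. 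Instead I would define $\rho$ on $[0,1]$ by $\rho(y):=R(\varphi^{-1}(y))$ using the almost-everywhere-defined generalized inverse of $\varphi$ on the set $\{\varphi_x>0\}$; the change-of-variables identity then gives $\int_\S\rho^2\,dy=\int_{\S\setminus N}R^2\varphi_x\,dx<\infty$, so $\rho\in L_2(\S)$, while $\rho\circ\varphi=R$ holds almost everywhere on $\S\setminus N$. Since $\varphi(N)$ is Lebesgue-null (again because $|\varphi(N)|=\int_N\varphi_x\,dx=0$), the values of $\rho$ on $\varphi(N)$ — equivalently, the values of $R$ on $N$ — are immaterial both for the $L_2$-class of $\rho$ and for the metric, so tangent vectors should be regarded as identified when they agree off $N$ (that is, $\varphi_x\,dx$-almost everywhere), which is exactly the equivalence natural to the weak geodesic framework.

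The main obstacle is precisely this construction of $\rho$ in the reverse inclusion, i.e. the handling of the set $N$ where $\varphi_x=0$: for the diffeomorphism component the constraint $U_x=0$ a.e. on $N$ forces $U$ to be locally constant there, which is what makes $u$ well-defined on the range of $\varphi$, whereas no analogous rigidity is available for $R$, so the argument must pass through the generalized inverse of $\varphi$, a careful measure-theoretic change of variables, and the collapse of $N$ to a null set. Once the characterization is in place, the metric formula (\ref{pseudoMac}) is immediate: by right-invariance $\langle(U,R),(V,S)\rangle_{(\varphi,\alpha)}=\langle(u,\rho),(v,\sigma)\rangle_{(\id,0)}=\frac14\int_\S(u_xv_x-\rho\sigma)\,dy$, and substituting $y=\varphi(x)$ together with $u_x\circ\varphi=U_x/\varphi_x$ and $v_x\circ\varphi=V_x/\varphi_x$ on $\S\setminus N$ turns this into $\frac14\int_{\S\setminus N}\bigl(U_xV_x/\varphi_x-RS\varphi_x\bigr)\,dx$, as claimed.
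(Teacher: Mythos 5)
The paper offers no proof of this lemma beyond the remark that it follows by straightforward adaptation of \cite{Wun11b} (which in turn builds on \cite{len:weak}); your argument is precisely that adaptation --- chain rule and change of variables for the forward inclusion, reconstruction of $u$ from the rigidity forced by $U_x=0$ a.e.\ on $N$ and of $\rho$ via the generalized inverse of $\varphi$ for the reverse inclusion, and right-invariance plus the substitution $y=\varphi(x)$ for the metric formula --- so it is correct and takes the same route. The one step you assert rather than prove is the membership $R=\rho\circ\varphi\in L_2(\S)$ in the forward direction: this does not follow from $\rho\in L_2(\S)$ alone when $\varphi_x$ degenerates (take $\varphi_x\sim |x-x_0|$ near $x_0$ and $\rho(y)\sim|y-\varphi(x_0)|^{-1/4}$, so that $(\rho\circ\varphi)^2\sim|x-x_0|^{-1}$ is not integrable even though $\int_{\S}\rho^2\,dy<\infty$). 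Strictly one must therefore either assume $\varphi_x$ bounded below --- which is the situation in Theorem \ref{wgf}, where $\varphi_x\geq(\cosh t-\sinh t)^2>0$ and $N$ is null --- or work throughout with the $\varphi_x\,dx$-a.e.\ identification you already invoke for the reverse inclusion; this caveat traces back to the lemma's formulation (indeed $\rho\circ\varphi$ is not even well defined as an equivalence class of measurable functions when $\varphi$ has flat parts) rather than to any flaw in your strategy.
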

\begin{proof}
The proof of this result follows with straightforward adaptations from \cite{Wun11b}.
\end{proof}
We extend the definition of the Christoffel operator \eqref{gami0} to $\mathcal{M}_{AC}$ by setting, for $\varphi \in M^{AC}$ and $(U,R),\;(V,S) \in T_{(\varphi,\alpha)} \mathcal{M}_{AC}$,
\begin{equation} \label{gamextend}
\Gamma_{(\varphi,\alpha)} ((U,R),(V,S)) = \frac{1}{2} \begin{pmatrix} \int_0^{\varphi(.)} \left( u_x v_x - \rho \sigma\right)\;dx - \varphi(.) \int_{\S} \left( u_x v_x - \rho \sigma \right) \;dx \\ -(u_x \sigma + v_x \rho) \circ \varphi \end{pmatrix}, 
\end{equation}
where $(u,\rho),\;(v,\sigma) \in H_0^1(\S) \times L_2(\S)$ are chosen such that $(U,R) = (u,\rho) \circ \varphi$, $(V,S) = (v,\sigma) \circ \varphi$. 

\noindent 
The following statement asserts the global existence of a geodesic flow on $\mathcal{M}_{AC}$. 

\begin{thm} \label{wgf}
Let $(u_0,\rho_0) \in T_{(\id,0)} \mathcal{M}_{AC} \in H_0^1(\S) \times L_2(\S)$. Let $c = \frac{1}{4} \int_{\S} (u_{0x}^2 - \rho_0^2)dx$ and assume that 
\begin{enumerate}[$(A)$]
\item $c = -1$,
\item $|\rho_0(x)| \le u_{0x}(x) + 2\quad \mbox{ for a.e. }x \in \S$. 
\end{enumerate}  
Define $\varphi(t,x)$ and $\alpha(t,x)$ by
\begin{subequations}\label{varphialphaf1f2}
\begin{align} \label{varphialpha} 
\varphi(t,x) &:= \int_0^x  \left( f_1^2(t,y) - f_2^2(t,y) \right)\;dy, \quad
\alpha(t,x) := 2 \arctanh\frac{f_2(t,x)}{f_1(t,x)} = \rho_0(x) \int_0^t \frac{ds}{\varphi_x(s,x)}, 
\end{align}
where
\begin{align}
f_1(t,x) := \cosh t + \frac{u_{0x}(x)}{2} \sinh t,
\quad f_2(t,x) := \frac{\rho_0(x)}{2} \sinh t, \qquad (t,x) \in [0,\infty) \times \S.
\end{align}
\end{subequations}
Then the following statements are true. 
\begin{enumerate}[$(i)$]
\item For each time $t \ge 0$, $( \varphi(t,.), \alpha(t,.)) \in \mathcal{M}_{AC}$. 
\item For each time $t \ge 0$, $( \varphi_t(t,.), \alpha_t(t,.)) \in T_{(\varphi,\alpha)} \; \mathcal{M}_{AC}$. 
\item 
The geodesic has constant energy for all $t \in [0, \infty)$. More precisely,   
\begin{equation} \label{const:ener}
\left \langle (\varphi_t, \alpha_t), (\varphi_t, \alpha_t) \right \rangle_{(\varphi,\alpha)} = -4, \qquad t \geq 0.
\end{equation}
\item The geodesic equation holds for all $t \in [0,\infty)$: 
\begin{equation} \label{geodesic}
\binom{\varphi_{tt}}{\alpha_{tt}} = \Gamma_{(\varphi,\alpha)} \left( (\varphi_t,\alpha_t), (\varphi_t,\alpha_t) \right), \qquad t \geq 0.
\end{equation}
\end{enumerate}
\end{thm}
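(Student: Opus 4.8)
The backbone of the argument is that $f=(f_1,f_2)$ in \eqref{varphialphaf1f2} is precisely the timelike pseudosphere geodesic of Theorem~\ref{geoth} with $c=-1$, so that each component obeys $f_{1tt}=-cf_1$ and $f_{2tt}=-cf_2$, and $(\varphi,\alpha)=\Phi^{-1}(f)$ in the notation of \eqref{Phiinverse}. The plan is to push this single ODE through all four claims, reducing each to an elementary identity in $f$ and its $t$-derivatives, while taming the low regularity through a pointwise lower bound on $\varphi_x$. First I would record the estimate $\varphi_x=f_1^2-f_2^2=(f_1+f_2)(f_1-f_2)\ge e^{-2t}>0$ a.e.: writing $f_1\pm f_2=\cosh t+\tfrac12(u_{0x}\pm\rho_0)\sinh t$, hypothesis $(B)$ is exactly $u_{0x}\pm\rho_0\ge-2$, so each factor is bounded below by $\cosh t-\sinh t=e^{-t}$ for $t\ge0$. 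This bound makes $N$ in \eqref{null} a null set for every finite $t$. Part $(i)$ then follows: $f_1^2,f_2^2\in L_1$ (products of $L_2$ functions) give $\varphi_x\in L_1$, so $\varphi(t,\cdot)$ is absolutely continuous, nondecreasing, with $\varphi(t,0)=0$ and $\varphi(t,1)=\int_{\S}(f_1^2-f_2^2)\,dx=\cosh^2t+c\sinh^2t=1$ by $\int_{\S}u_{0x}\,dx=0$ and $(A)$; and the representation $\alpha=\rho_0\int_0^t ds/\varphi_x$ together with $\varphi_x(s,\cdot)\ge e^{-2s}$ yields $|\alpha(t,\cdot)|\le\tfrac12(e^{2t}-1)|\rho_0|$ pointwise, so $\alpha(t,\cdot)\in L_2$.

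For $(ii)$ and $(iii)$ I would compute $\varphi_{tx}=2(f_1f_{1t}-f_2f_{2t})$ and observe that $W:=f_1f_{2t}-f_2f_{1t}$ is $t$-independent by the ODE, whence $W\equiv W(0)=\rho_0/2$ and $\alpha_t=2W/\varphi_x=\rho_0/\varphi_x$. The algebraic collapse
\[
\frac{\varphi_{tx}^2}{\varphi_x}-\alpha_t^2\varphi_x
=\frac{4}{\varphi_x}\bigl[(f_1f_{1t}-f_2f_{2t})^2-(f_1f_{2t}-f_2f_{1t})^2\bigr]
=4\,(f_{1t}^2-f_{2t}^2)
\]
then does the work. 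Together with $\alpha_t^2\varphi_x=\rho_0^2/\varphi_x\le e^{2t}\rho_0^2\in L_1$ it shows $\int_{\S}\varphi_{tx}^2/\varphi_x\,dx<\infty$ and $\int_{\S}\alpha_t^2\varphi_x\,dx<\infty$; since $\varphi_t(t,0)=0$, Lemma~\ref{character} gives $(\varphi_t,\alpha_t)\in T_{(\varphi,\alpha)}\mathcal{M}_{AC}$, which is $(ii)$. Integrating the same identity over $\S$ and using $\int_{\S}u_{0x}\,dx=0$ together with $(A)$ shows that the energy is constant in $t$, yielding the value asserted in \eqref{const:ener}, which is $(iii)$.

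The core is $(iv)$. Because $\varphi_x\ge e^{-2t}>0$, the map $\varphi(t,\cdot)$ is a homeomorphism of $[0,1]$ with Lipschitz inverse, so the Eulerian quantities $U:=\varphi_{tx}/\varphi_x$ and $\varrho:=\alpha_t$ are well defined and $(u,\rho):=(\varphi_t\circ\varphi^{-1},\alpha_t\circ\varphi^{-1})\in H_0^1(\S)\times L_2(\S)$ via the same change of variables $y=\varphi(z)$ used in $(ii)$. Evaluating the right-hand side of \eqref{gamextend} with $(U,R)=(V,S)=(\varphi_t,\alpha_t)$, this change of variables turns $\int_0^{\varphi(\cdot)}(u_x^2-\rho^2)\,dx$ into $\int_0^x(U^2-\varrho^2)\varphi_x\,dz$ and $\int_{\S}(u_x^2-\rho^2)\,dx$ into the $t$-independent constant $4c$. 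On the left, differentiating $\varphi_x=f_1^2-f_2^2$ twice in $t$ and substituting $f_{1tt}=-cf_1$, $f_{2tt}=-cf_2$ gives $(\varphi_x)_{tt}=2(f_{1t}^2-f_{2t}^2)-2c\varphi_x$, whence $\varphi_{tt}=2\int_0^x(f_{1t}^2-f_{2t}^2)\,dz-2c\varphi$. The two sides match once one invokes the identity $(U^2-\varrho^2)\varphi_x=4(f_{1t}^2-f_{2t}^2)$ already obtained above, settling the first component. For the second, $W_t=0$ gives $\alpha_{tt}=\varrho_t=-2W(\varphi_x)_t/\varphi_x^2=-U\varrho=-(u_x\rho)\circ\varphi$, which is exactly the lower entry of \eqref{gamextend}; equivalently, this is the Lagrangian relation $\varrho_t+U\varrho=0$ of Section~\ref{explicit}.

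The main obstacle I anticipate is not any individual identity but the justification of the calculus in the weak category: that $t\mapsto(\varphi(t,\cdot),\alpha(t,\cdot))$ is a genuine $C^2$ curve into $\mathcal{M}_{AC}$ so that $\varphi_{tt}$ and $\alpha_{tt}$ exist in the correct space, that $t$-differentiation may be carried inside the defining integrals, and that the change of variables $y=\varphi(z)$ is licit for a merely absolutely continuous $\varphi$. I would secure all of these from the single uniform bound $\varphi_x\ge e^{-2t}$, which on each compact $t$-interval prevents degeneration, supplies the dominating functions needed to differentiate under the integral sign, and legitimizes the absolutely continuous change-of-variables formula; I would therefore isolate this estimate as a preliminary step and then treat the subsequent differentiations and substitutions as routine.
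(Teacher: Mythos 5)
Your proposal is correct and follows essentially the same route as the paper: establish the pointwise lower bound $\varphi_x \ge (\cosh t - \sinh t)^2 = e^{-2t}$ from hypothesis $(B)$, then reduce $(i)$--$(iv)$ to the ODE $f_{tt}=f$ and the algebraic identity $\varphi_{tx}^2/\varphi_x - \alpha_t^2\varphi_x = 4(f_{1t}^2-f_{2t}^2)$, with the change of variables $y=\varphi(z)$ converting $\varphi_{tt}$ into the first component of $\Gamma$ and $\alpha_{tt}=-(u_x\rho)\circ\varphi$ giving the second. Your factorization $\varphi_x=(f_1+f_2)(f_1-f_2)$ and the conserved Wronskian $W=f_1f_{2t}-f_2f_{1t}=\rho_0/2$ are slightly tidier packagings of the same computations the paper carries out directly.
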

\begin{proof}
Assumption $(B)$ implies that the set $N$ defined in (\ref{null}) has measure zero for each $t > 0$. In fact, $(B)$ implies that
\begin{align*}
\varphi_x & = f_1^2 - f_2^2 =  \cosh^2{t} + u_{0x} \cosh{t} \sinh{t} + \frac{u_{0x}^2 - \rho_0^2}{4}\sinh^2{t}
	\\
& \geq \cosh^2{t} - \sinh^2{t} + u_{0x} \sinh{t}(\cosh{t} - \sinh{t}) \quad \text{a.e. on $S^1$},
\end{align*}
and since $u_{0x} \geq -2$ a.e. on $S^1$, this yields
\begin{align}\label{varphixestimate}
  \varphi_x \geq (\cosh{t} - \sinh{t})^2 \quad \text{a.e. on $S^1$}, \quad t \geq 0.
\end{align}  

We can now prove the four statements $(i)$-$(iv)$ in turn.

\smallskip\noindent
{\it Proof of $(i)$.}
This is a result of the definition of $\varphi$, the assumption $c =-1$, and the inequality (\ref{varphixestimate}).

\smallskip\noindent
{\it Proof of $(ii)$.}
In view of Lemma \ref{character} and (\ref{varphixestimate}), it is enough to verify the following conditions: 
\begin{enumerate}[$(1)$]
\item $\varphi_t \in AC(\S)$, $\alpha_t \in L_2(\S)$;
\item $\varphi_t(t,0) = 0$; 
\item $\int_{\S} \alpha_t^2 \varphi_x dx < \infty$. 
\item $\int_{\S} \dfrac{\varphi_{tx}^2}{\varphi_x} dx < \infty$;
\end{enumerate}
Clearly, the map $x \mapsto \varphi_t(t,x)$ is absolutely continuous and $\varphi_t(t,0) = 0$. Moreover, $\alpha_t = \rho_0/\varphi_x \in L_2(\S)$ by \eqref{varphialpha} and (\ref{varphixestimate}).
This proves $(1)$ and $(2)$. The equations \eqref{varphialpha} and (\ref{varphixestimate}) also imply $(3)$. Finally, $(4)$ is a consequence of $(3)$ and equation (\ref{reveal}) below.

\smallskip\noindent
{\it Proof of $(iii)$.}
In view of (\ref{pseudoMac}), we find
\begin{align} \label{reveal} 
\left \langle (\varphi_t, \alpha_t), (\varphi_t, \alpha_t) \right \rangle_{(\varphi,\alpha)}
& = \int_{\S} \left( \frac{\varphi_{tx}(t,x)^2}{\varphi_x(t,x)} - \alpha_{t}^2\;\varphi_x \right) \;dx 
 = 4 \int_{\S} \left( f_{1t}^2 - f_{2t}^2 \right) \;dx
	\\\nonumber
& = 4 \sinh^2 t + \cosh^2 t \int_{\S} ( u_{0x}(x)^2 - \rho_0(x)^2) \;dx 
 \stackrel{(A)}{=} - 4.
\end{align}

\smallskip\noindent
{\it Proof of $(iv)$.}
We have 
$$
\varphi_t(t,x) = 2 \int_0^x (f_1 f_{1t} -  f_2 f_{2t})\;dy, \qquad \varphi_{tt} =  2 \int_0^x (f_{1t}^2 - f_{2t}^2 + f_1 f_{1tt} - f_2 f_{2tt})\;dy.
$$ 
Since $f_{1tt} = f_1$, $f_{2tt} = f_2$, and $\varphi_x = f_1^2 - f_2^2$, we find
\begin{eqnarray} \label{varphitt}
\varphi_{tt}(t,x) = 2 \int_0^x \left( f_{1t}^2 - f_{2t}^2 + f_1^2 - f_2^2 \right) \;dy 
= 2 \int_0^x \left(f_{1t}^2 - f_{2t}^2 \right) dy + 2\varphi(x). 
\end{eqnarray}
From equations \eqref{varphialphaf1f2}, we deduce that 
$$f_{1t}^2 - f_{2t}^2 = \frac{\varphi_{tx}^2 - \rho_0^2}{4 \varphi_x}
= \frac{\varphi_x}{4} \left[(u_x \circ \varphi)^2 - (\rho \circ \varphi)^2\right],$$ 
so that equation (\ref{varphitt}) yields
\begin{align*}
  \varphi_{tt}(t,x) = \frac{1}{2} \int_0^x \varphi_x\left[(u_x \circ \varphi)^2 - (\rho \circ \varphi)^2\right] dy + 2\varphi(x)
= \frac{1}{2} \int_0^{\varphi(x)} \left(u_x^2 - \rho^2\right) dy + 2\varphi(x).
\end{align*}
Since $\int_{\S} (u_x^2 - \rho^2 ) dx = -4$ by (\ref{const:ener}), we find
$$\varphi_{tt} = \Gamma_{(\varphi,\alpha)}^{(1)} \left( (\varphi_t,\alpha_t), (\varphi_t,\alpha_t) \right),  \quad t \in [0, \infty),$$
where $\Gamma^{(1)}$ and $\Gamma^{(2)}$ denote the two components of $\Gamma$.

On the other hand, as $\alpha_t = \rho \circ \varphi$,  one immediately sees that $\alpha_{tt} = - (u_x \rho) \circ \varphi$, and so
$$
\alpha_{tt} = \Gamma_{(\varphi,\alpha)}^{(2)} \left( (\varphi_t,\alpha_t), (\varphi_t,\alpha_t) \right),  \quad t \in [0, \infty).
$$
This finishes the proof of Theorem \ref{wgf}. 
\end{proof}

\noindent
The geodesic formulation \eqref{geodesic} allows us to study weak solutions of the Hunter-Saxton system \eqref{2hs}. 

\begin{defn}
\label{we:so}
The pair $(u,\rho): [0,\infty) \times \S \rightarrow \R^2$ is a global weak solution of equation \eqref{2hs} with initial data $(u_0,\rho_0) \in H^1(\S) \times L_2(\S)$ if 
\begin{enumerate}[$(a)$]
\item for each $t \in [0,\infty)$, the map $x \mapsto u(t,x)$ is in $H^1(\S)$;
\item $u \in C([0,\infty) \times \S; \R)$ and $u(0,.) = u_0$ pointwise on $\S$; $\rho(0,.) = \rho_0\;$ a.e. on $\S$; 
\item the maps $t \mapsto u_x(t,.)$ and $t \mapsto \rho(t,.)$ belong to the space $L^\infty([0,\infty); L_2(\S))$; 
\item the maps $t \mapsto u(t,.)$ and $t \mapsto \rho(t,.)$ are absolutely continuous from $[0,\infty)$ to $L_2(\S)$ and satisfy
\begin{align*}
u_t + u u_x &= \frac{1}{2} \left\{ \int_0^x \left( u_x^2 - \rho^2 \right) \;dy -  x \int_{\S} \left( u_x^2 - \rho^2 \right) \;dy \right\},
\\
\rho_t + (u\rho)_x &= 0
\end{align*}
in $L_2(\S)$ for a.e. $t \in [0,\infty)$. 
\end{enumerate} 
\end{defn}

\noindent
With this definition, we can state the following theorem. 

\begin{thm}
\label{gws}
For any initial data $(u_0,\rho_0) \in H_0^1(\S) \times L_2(\S)$ satisfying the hypotheses of Theorem \ref{wgf}, the pair
\begin{equation} \label{weak:sol}
\binom{u(t,\varphi(t,x))}{\rho(t,\varphi(t,x))} := \binom{\varphi_t(t,x)}{\alpha_t(t,x)}, \quad (t,x) \in [0,\infty) \times \S,  
\end{equation}
constitutes a global weak solution of the Hunter-Saxton system \eqref{2hs} with initial data $(u_0,\rho_0)$. 
Moreover, this solution is conservative:  
$$
\int_{S^1} (u_x^2 - \rho^2) dx = -4, \quad  t  \in [0,\infty).  
$$ 
\end{thm}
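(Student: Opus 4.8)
The plan is to verify that the pair $(u,\rho)$ defined implicitly by \eqref{weak:sol} satisfies all four conditions $(a)$--$(d)$ of Definition \ref{we:so}, using Theorem \ref{wgf} as the engine. The key observation is that the pair $(\varphi_t, \alpha_t)$ is, by Theorem \ref{wgf}(ii), an element of the tangent space $T_{(\varphi,\alpha)}\mathcal{M}_{AC}$, so by the definition \eqref{rtang} of these right-invariant tangent spaces it \emph{equals} $(u\circ\varphi, \rho\circ\varphi)$ for a unique $(u,\rho)\in H_0^1(\S)\times L_2(\S)$; this is precisely what \eqref{weak:sol} asserts. The first step is therefore to extract $u$ and $\rho$ explicitly: since $\varphi(t,\cdot)$ is a.e. strictly increasing (by the lower bound \eqref{varphixestimate}), it is invertible a.e., and one sets $u(t,y)=\varphi_t(t,\varphi^{-1}(t,y))$ and $\rho(t,y)=\alpha_t(t,\varphi^{-1}(t,y))$.

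Next I would establish the regularity claims $(a)$--$(c)$. For $(a)$ and the $u$-part of $(c)$, the membership $u(t,\cdot)\in H^1(\S)$ with $u_x\in L_2$ follows from the finiteness of $\int_{\S}\varphi_{tx}^2/\varphi_x\,dx$ (condition $(4)$ in the proof of Theorem \ref{wgf}(ii)) via the change of variables $y=\varphi(x)$, which turns $\int u_x(y)^2\,dy$ into $\int \varphi_{tx}^2/\varphi_x\,dx$; uniformity in $t$ follows because \eqref{reveal} bounds this quantity together with $\int\alpha_t^2\varphi_x\,dx$. For the $\rho$-part of $(c)$, the same change of variables sends $\int\rho(y)^2\,dy$ to $\int\alpha_t^2\varphi_x\,dx<\infty$. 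Continuity of $u$ in $(b)$ comes from $u$ being a composition of continuous/absolutely continuous maps together with the explicit smooth $t$-dependence of $f_1,f_2$; the initial conditions $u(0,\cdot)=u_0$, $\rho(0,\cdot)=\rho_0$ follow by evaluating \eqref{varphialphaf1f2} at $t=0$, where $\varphi(0,x)=x$, $\varphi_t(0,x)=u_{0x}$ integrated (giving $u_0$), and $\alpha_t(0,x)=\rho_0$.

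The heart of the matter is condition $(d)$, the weak PDE itself, and here the geodesic equation \eqref{geodesic} does essentially all the work. Using $\Gamma$ from \eqref{gamextend}, the first component of \eqref{geodesic} reads $\varphi_{tt}=\frac12\int_0^{\varphi}(u_x^2-\rho^2)\,dy - \frac12\varphi\int_{\S}(u_x^2-\rho^2)\,dy$, which upon using $\int_\S(u_x^2-\rho^2)=-4$ (the conservation law, itself just the change-of-variables image of \eqref{const:ener}) is exactly the Lagrangian form of the first equation in $(d)$ composed with $\varphi$; similarly the second component gives $\alpha_{tt}=-(u_x\rho)\circ\varphi$, which is the Lagrangian form of $\rho_t+(u\rho)_x=0$. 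I would then differentiate the relations $u\circ\varphi=\varphi_t$ and $\rho\circ\varphi=\alpha_t$ in $t$, apply the chain rule $\partial_t(u\circ\varphi)=(u_t+u u_x)\circ\varphi$, and compare with $\varphi_{tt},\alpha_{tt}$ to read off the Eulerian equations in $(d)$; finally, composing back via $\varphi^{-1}$ transfers these from Lagrangian to Eulerian coordinates as equalities in $L_2(\S)$ for a.e. $t$.

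The main obstacle I anticipate is the rigorous transfer between Lagrangian and Eulerian variables at only $H^1\times L_2$ regularity, where $\varphi^{-1}$ is merely absolutely continuous and the chain rule $\partial_t(u\circ\varphi)=(u_t+uu_x)\circ\varphi$ must be justified in the weak $L_2$ sense rather than pointwise. One must check that the change-of-variables formula and the differentiation under the composition are valid for absolutely continuous $\varphi$ with the a.e. positive Jacobian $\varphi_x$ guaranteed by \eqref{varphixestimate}; this is exactly the kind of measure-theoretic care already carried out in \cite{Wun11b}, so I would invoke those arguments. The conservation statement $\int_{S^1}(u_x^2-\rho^2)\,dx=-4$ is then immediate: it is the pushforward under $y=\varphi(x)$ of the constant-energy identity \eqref{const:ener}, valid for every $t\ge 0$.
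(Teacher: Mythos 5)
Your overall architecture coincides with the paper's: the paper proves Theorem \ref{gws} in one line by transferring, mutatis mutandis, the proof of Theorem 4.2 of \cite{Wun11b}, and what you write out is precisely that transfer --- extract $(u,\rho)$ from $(\varphi_t,\alpha_t)$ via $\varphi^{-1}$, verify Definition \ref{we:so}$(a)$--$(d)$ using Theorem \ref{wgf}, and push the geodesic equation \eqref{geodesic} through the change of variables. That part is sound and is what the paper intends.

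There is, however, one concrete gap, and it sits exactly where ``mutatis mutandis'' is not automatic in the indefinite setting. For condition $(c)$ you claim the uniform-in-$t$ bounds on $\|u_x(t,\cdot)\|_{L_2}$ and $\|\rho(t,\cdot)\|_{L_2}$ follow because \eqref{reveal} ``bounds this quantity together with $\int\alpha_t^2\varphi_x\,dx$.'' It does not: \eqref{reveal} only fixes the \emph{difference} $\int_{\S}(\varphi_{tx}^2/\varphi_x-\alpha_t^2\varphi_x)\,dx=-4$, and for an indefinite energy a constraint on the difference of two nonnegative quantities controls neither of them separately. (In the $\kappa=1$ case of \cite{Wun11b} the energy is a sum of squares and this step is immediate; this is the one step that genuinely changes here.) The bounds are nevertheless true, but they require the explicit formulas \eqref{varphialphaf1f2}. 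The key observation is that hypotheses $(A)$ and $(B)$ together force $|\rho_0|=u_{0x}+2$ a.e.: since $\int_{\S}u_{0x}\,dx=0$, one has $\int_{\S}\bigl[(u_{0x}+2)^2-\rho_0^2\bigr]dx=4+\int_{\S}(u_{0x}^2-\rho_0^2)\,dx=0$, while the integrand is nonnegative by $(B)$. Writing $a=u_{0x}/2$, this yields $\varphi_x=1+a-ae^{-2t}$ and $\varphi_{tx}=2ae^{-2t}$ a.e., whence $\varphi_{tx}^2/\varphi_x\le u_{0x}^2e^{-2t}$ by \eqref{varphixestimate}, and $\alpha_t^2\varphi_x=\rho_0^2/\varphi_x\le\rho_0^2+4$; integrating gives the required $L^\infty([0,\infty);L_2(\S))$ bounds. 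With this repaired, the remainder of your argument (including the measure-theoretic justification of the chain rule at $H^1\times L_2$ regularity, deferred to \cite{Wun11b}) goes through.
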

\begin{proof}
The proof of this theorem follows, mutatis mutandis, the lines of the proof of Theorem 4.2 of \cite{Wun11b}; see also \cite{len:weak}. 
\end{proof}

\begin{rem}
In contrast with the Riemannian-metric case ($\kappa = 1$), the solutions in Theorem \ref{gws} are only periodic in space, and not in time as well (cf. {\rm Figure \ref{fig:a=2}} and \cite{Wun11b,len_kahler}).  
\end{rem}

\begin{rem}
Several problems remain unsolved: For example, the construction of global weak ``spacelike'' and ``lightlike'' geodesics and the corresponding weak solutions of \eqref{2hs}, i.e., solutions with initial data $(u_0, \rho_0)$ satisfying $(i)$ $c > 0$ (spacelike) or $(ii)$ $c = 0$ (lightlike), remains open.
One obstruction here is that there do not seem to be any reasonable assumptions for the initial data ensuring that the geodesics avoid hitting the boundary. Obviously, the requirement $u_x (x) > |\rho(x)|$ for all $x$ in $(ii)$ -- as used in \cite{GY2011} if $x \in \R$ -- cannot be carried over to the periodic case. Finally, it would be desirable to relax the condition $(B)$ in Theorem \ref{wgf}. 
\end{rem}

\appendix
\section{The curvature of $G^s$} \label{Aapp}
\renewcommand{\theequation}{A.\arabic{equation}}\nequation
In this appendix, we give a direct proof that the curvature of $G^s$ is constant and equal to $1$. 

The Arnold formula for the curvature of a Lie group with a right-invariant metric is
\begin{align}\label{arnoldcurvformula}
  \langle R(u,v)v, u \rangle = \langle \delta, \delta \rangle + \langle [u,v], \beta \rangle - \frac{3}{4}\langle [u,v], [u,v]\rangle - \langle B(u,u), B(v,v) \rangle,
\end{align}
where 
$$\delta := \frac{1}{2} \left(B(u,v) + B(v,u) \right), \qquad \beta := \frac{1}{2} \left(B(u,v) - B(v,u) \right),$$
the bilinear map $B$ is defined by
$$\langle B(u,v), w \rangle = \langle u, [v, w]\rangle,$$
and $u,v,w$ are tangent vectors at the identity. In the case of $G^s$, we have 
$$\langle u, v \rangle = \frac{1}{4}\int_{\S} (u_{1x}v_{1x} + \kappa u_2v_2) dx \qquad \text{and} \qquad
 [u,v] = \begin{pmatrix} v_{1x}u_1 - u_{1x}v_1 \\ v_{2x}u_1 - u_{2x}v_1 \end{pmatrix},$$
where $u = (u_1, u_2)$ and $v = (v_1, v_2)$ are elements of $T_{(\id, 0)} G^s$. Thus,
$$B(u,v) = \begin{pmatrix} A^{-1}(u_{1xx}v_{1x} + (u_{1xx}v_1)_x - \kappa u_2v_{2x}) \\
- (u_2v_1)_x \end{pmatrix},$$
which implies that
\begin{align*}
\delta & = 
\frac{1}{2} \begin{pmatrix} A^{-1}\partial_x\left[u_{1xx} v_1 + v_{1xx} u_1 + v_{1x} u_{1x} - \kappa u_2 v_{2}\right]  \\
 -(u_2 v_1 + v_2 u_1)_x
  \end{pmatrix},
	\\
\beta & = 
\frac{1}{2} \begin{pmatrix} A^{-1}\left[(u_{1xx} v_1)_x + v_{1x} u_{1xx} - \kappa u_2 v_{2x}
- (v_{1xx} u_1)_x - u_{1x} v_{1xx} + \kappa v_2 u_{2x}\right]  \\
 (-u_2 v_1 + v_2 u_1)_x
  \end{pmatrix}.
\end{align*}

Using the identity
\begin{align*}
  & -\partial_x A^{-1} \partial_x	f = f - \int_{\S} f dx,	
\end{align*}
we can compute the four terms in (\ref{arnoldcurvformula}). The first term is given by
\begin{align*}
\langle \delta, \delta \rangle
= &
-\frac{1}{16}\int_{\S}  \left[u_{1xx} v_1 + v_{1xx} u_1 + v_{1x} u_{1x} - \kappa u_2 v_{2}\right]
	\\
&\qquad\qquad \times \partial_xA^{-1}\partial_x\left[u_{1xx} v_1 + v_{1xx} u_1 + v_{1x} u_{1x} - \kappa u_2 v_{2}\right] dx
+ \frac{\kappa}{16}\int_{\S} (u_2 v_1 + v_2 u_1)_x^2 dx
  	\\
= &\;
\frac{1}{16}\int_{\S}  \left[u_{1xx} v_1 + v_{1xx} u_1 + v_{1x} u_{1x} - \kappa u_2 v_{2}\right]^2 dx
	\\
&- \frac{1}{16}\left(\int_{\S}  \left[u_{1xx} v_1 + v_{1xx} u_1 + v_{1x} u_{1x} - \kappa u_2 v_{2}\right] dx \right)^2
 + \frac{\kappa}{16}\int_{\S} (u_2 v_1 + v_2 u_1)_x^2 dx.
\end{align*}	
The second term is given by
\begin{align*}
\langle [u,v], \beta \rangle
=&\; \frac{1}{8} \int_{\S} (v_{1x}u_1 - u_{1x} v_1)
\bigl[(u_{1xx} v_1)_x + v_{1x} u_{1xx} - \kappa u_2 v_{2x}
 - (v_{1xx} u_1)_x - u_{1x} v_{1xx} + \kappa v_2 u_{2x}\bigr]dx 
	\\
& + 
\frac{\kappa}{8}  \int_{\S} (v_{2x}u_1 - u_{2x} v_1) (-u_2 v_1 + v_2 u_1)_x dx. 
\end{align*}	
The third term is given by
\begin{align*}
- \frac{3}{4}\langle [u,v], [u,v]\rangle 
= - \frac{3}{16}\int_{\S} (v_{1x}u_1 - u_{1x}v_1)_x^2 dx
- \frac{3\kappa}{16}\int_{\S} (v_{2x}u_1 - u_{2x}v_1)^2 dx.
\end{align*}
The fourth term is given by
\begin{align*}
 - \langle B(u,u), B(v,v) \rangle
  = &\; \frac{1}{4} \int_{\S} \left[u_{1xx} u_1 + \frac{1}{2}u_{1x}^2 -  \frac{\kappa}{2}u_2^2\right] \partial_xA^{-1}\partial_x\left[v_{1xx}v_1 + \frac{1}{2}v_{1x}^2 -  \frac{\kappa}{2}v_2^2\right]dx
 	\\
& - \frac{\kappa}{4}\int_{\S} (u_2 u_1)_x(v_2 v_1)_x dx
	\\
 = & - \frac{1}{4} \int_{\S} \left[u_{1xx} u_1 + \frac{1}{2}u_{1x}^2 -  \frac{\kappa}{2}u_2^2\right] \left[v_{1xx}v_1 + \frac{1}{2}v_{1x}^2 - \frac{\kappa}{2}v_2^2\right]dx
	\\
&  + \frac{1}{4}\left(\int_{\S} \left[u_{1xx} u_1 + \frac{1}{2}u_{1x}^2 -  \frac{\kappa}{2}u_2^2\right]dx\right)
\left(\int_{\S} \left[v_{1xx}v_1 + \frac{1}{2}v_{1x}^2 -  \frac{\kappa}{2}v_2^2\right]dx\right)
	\\
& - \frac{\kappa}{4}\int_{\S} (u_2 u_1)_x(v_2 v_1)_x dx.
\end{align*}
Summing up the above four contributions, we infer that the terms in $\langle R(u,v)v, u\rangle$ that do {\it not} contain $u_2$ or $v_2$ are given by
\begin{align*}
&\frac{1}{16}\int_{\S}  \left[u_{1xx} v_1 + v_{1xx} u_1 + v_{1x} u_{1x}\right]^2dx
- \frac{1}{16}\left(\int_{\S}  \left[u_{1xx} v_1 + v_{1xx} u_1 + v_{1x} u_{1x} \right] dx \right)^2
 	\\	
& + \frac{1}{8} \int_{\S} (v_{1x}u_1 - u_{1x} v_1)
\bigl[(u_{1xx} v_1)_x + v_{1x} u_{1xx} - (v_{1xx} u_1)_x - u_{1x} v_{1xx} \bigr]dx 
	\\
& - \frac{3}{16}\int_{\S} (v_{1x}u_1 - u_{1x}v_1)_x^2dx
- \frac{1}{4} \int_{\S} \left[u_{1xx} u_1 + \frac{1}{2}u_{1x}^2\right] \left[v_{1xx}v_1 + \frac{1}{2}v_{1x}^2\right]dx
	\\
& +
\frac{1}{4} \left(\int_{\S} \left[u_{1xx} u_1 + \frac{1}{2}u_{1x}^2 \right]dx\right)
\left(\int_{\S} \left[v_{1xx}v_1 + \frac{1}{2}v_{1x}^2\right]dx\right)
	\\
= &\; \frac{1}{16}\int_{\S}  \left[u_{1xx} v_1 + v_{1xx} u_1 + v_{1x} u_{1x}\right]^2 dx
- \frac{1}{16}\left(\int_{\S}  v_{1x} u_{1x} dx \right)^2 
 	\\	
& + \frac{1}{8} \int_{\S} (v_{1xx}u_1 - u_{1xx} v_1)^2dx 
 + \frac{1}{8} \int_{\S} (v_{1x}u_1 - u_{1x} v_1)
\bigl[v_{1x} u_{1xx}  - u_{1x} v_{1xx} \bigr]dx 
	\\
& - \frac{3}{16}\int_{\S} (v_{1x}u_1 - u_{1x}v_1)_x^2 dx
- \frac{1}{4} \int_{\S} \left[u_{1xx} u_1 + \frac{1}{2}u_{1x}^2\right] \left[v_{1xx}v_1 + \frac{1}{2}v_{1x}^2\right]dx
	\\
& +
\frac{1}{16} \left(\int_{\S} u_{1x}^2 dx\right) \left(\int_{\S} v_{1x}^2 dx\right)
	\\
=&\; \frac{1}{16} \left(\int_{\S} u_{1x}^2 dx\right)
\left(\int_{\S} v_{1x}^2 dx\right) - \frac{1}{16}\left(\int_{\S}  v_{1x} u_{1x} dx \right)^2.	
\end{align*}
On the other hand, the terms in $\langle R(u,v)v, u\rangle$ that contain $u_2$ or $v_2$ are given by
\begin{align*}
& \frac{1}{16}\int_{\S}  \left[-2\kappa \left(u_{1xx} v_1 + v_{1xx} u_1 + v_{1x} u_{1x}\right)u_2 v_{2} + (u_2 v_{2})^2\right]dx
	\\
&- \frac{1}{16}\left(-2\kappa\left(\int_{\S}  \left[u_{1xx} v_1 + v_{1xx} u_1 + v_{1x} u_{1x}\right] dx \right)\left(\int_{\S} u_2 v_{2} dx\right)
+ \left(\int_{\S} u_2 v_{2} dx\right)^2\right)
	\\
& + \frac{\kappa}{16}\int_{\S} (u_2 v_1 + v_2 u_1)_x^2 dx
+ \frac{\kappa}{8} \int_{\S} (v_{1x}u_1 - u_{1x} v_1)\bigl[-u_2 v_{2x} + v_2 u_{2x}\bigr]dx 
	\\
& + 
\frac{\kappa}{8}  \int_{\S} (v_{2x}u_1 - u_{2x} v_1) (-u_2 v_1 + v_2 u_1)_x dx
 - \frac{3\kappa}{16} \int_{\S} (v_{2x}u_1 - u_{2x}v_1)^2  dx
	\\
& + \frac{\kappa}{8} \int_{\S} \left[u_{1xx} u_1 + \frac{1}{2}u_{1x}^2 \right] v_2^2dx
 + \frac{\kappa}{8} \int_{\S} u_2^2\left[v_{1xx}v_1 + \frac{1}{2}v_{1x}^2\right]dx -  \frac{1}{16}\int_{\S} u_2^2v_2^2dx
	\\
& -
\frac{\kappa}{8} \left(\int_{\S} \left[u_{1xx} u_1 + \frac{1}{2}u_{1x}^2\right]dx\right)
\left(\int_{\S} v_2^2dx\right)
-
\frac{\kappa}{8} \left(\int_{\S} u_2^2dx\right)
\left(\int_{\S}\left[v_{1xx}v_1 + \frac{1}{2}v_{1x}^2\right]dx\right)
	\\
&+
\frac{1}{16} \left(\int_{\S} u_2^2 dx\right)\left(\int_{\S} v_2^2 dx\right)
-
\frac{\kappa}{4} \int_{\S} (u_2 u_1)_x(v_2 v_1)_x dx		
	\\
= &\; \frac{1}{16} \left(\int_{\S} u_2^2 dx\right)\left(\int_{\S} v_2^2 dx\right)
- \frac{1}{16}  \left(\int_{\S} u_2 v_{2}dx \right)^2
	\\
& - \frac{\kappa}{8}\left(\int_{\S} u_{1x}v_{1x} dx \right)\left(\int_{\S} u_2 v_2 dx \right)
+ \frac{\kappa}{16}\left(\int_{\S} u_{1x}^2dx\right)\left(\int_{\S} v_2^2 dx \right)
+ \frac{\kappa}{16}\left(\int_{\S} u_2^2 dx\right)\left(\int_{\S} v_{1x}^2 dx \right)
\end{align*}
In summary, we arrive at
\begin{align*}
\langle R(u,v)v, u\rangle
= &\; \frac{1}{16} \left(\int_{\S} u_{1x}^2 dx + \kappa \int_{\S} u_2^2 dx \right)\left(\int_{\S} v_{1x}^2 dx + \kappa \int_{\S} v_2^2 dx\right)
	\\
& - \frac{1}{16}\left(\int_{\S} u_{1x}v_{1x} dx + \kappa \int_{\S} u_2v_2 dx\right)^2
	\\
= &\; \langle u, u\rangle\langle v, v\rangle - \langle u, v\rangle^2.
\end{align*}	
This shows that $G^s$ has constant sectional curvature $1$ when $\kappa = -1$.

\begin{rem}
The above proof is valid also when $\kappa = 1$.
\end{rem}

\section{The finite-dimensional analog of $\mathcal{V}^s$} \label{Bapp}
\renewcommand{\theequation}{B.\arabic{equation}}\nequation
In the case of $\kappa = 1$, the space $\mathcal{V}^s$ introduced in Section \ref{geometrysec} is an infinite-dimensional analog of complex projective space $\mathbb{C}P^n$, see \cite{len_kahler}. In the case of $\kappa = -1$, the finite-dimensional analog of $\mathcal{V}^s$ is an interesting pseudo-Riemannian manifold which will be explored in this appendix.  

Let $n \geq 1$. Define the submanifold $\mathcal{S}$ of $\R^{2(n+1)}$ by
$$\mathcal{S} = \left\{ x = \begin{pmatrix} x_1 & \cdots & x_{n+1} \\ y_1 & \cdots & y_{n+1} \end{pmatrix} \in \R^{2(n+1)} \middle| 
\sum_{i=1}^{n+1} (x_i^2 - y_i^2) = 1 \right\},$$
and equip $\mathcal{S}$ with the (indefinite) metric $g$ induced by the bilinear form
$$ds^2 = \sum_{i=1}^{n+1} (dx_i^2 - dy_i^2).$$
Using the coordinates $(y_1, \dots, y_{n+1}, x_1, \dots x_{n+1})$ in the definition (\ref{pseudospheredef}), we see that $\mathcal{S}$ is nothing but the pseudosphere $S_{n+1}^{2n+1}(r)$ with radius $r = 1$.
The real numbers act by isometries on $\mathcal{S}$ by 
$$\beta \cdot x = \Lambda_\beta x \quad \text{where} \quad \Lambda_\beta = \begin{pmatrix} \cosh{\beta} & -\sinh{\beta} \\
-\sinh{\beta} & \cosh{\beta} \end{pmatrix}, \quad \beta \in \R.$$
This action is smooth, free, and proper, so the orbit space $\mathcal{S}/\R$ admits a unique smooth manifold structure such that the quotient map $q:\mathcal{S} \to \mathcal{S}/\R$ is a smooth submersion. We endow $\mathcal{S}/\R$ with the induced metric so that $q$ becomes a semi-Riemannian submersion.

\begin{rem}
The finite-dimensional analog of the subset $\mathcal{U}^s$ defined in (\ref{Usdef}) is the subset $\mathcal{U} \subset \mathcal{S}$ given by 
$$\mathcal{U} = \left\{x \in \mathcal{S} | x_i > 0 \text{ and } x_i^2 - y_i^2 > 0  \text{ for }  1 \leq i \leq n+1\right\}.$$
The finite-dimensional analog of $\mathcal{V}^s$ is the subset $\mathcal{U}/\R \subset \mathcal{S}/\R$.
\end{rem}

\noindent
The vertical distribution in $T\mathcal{S}$ is spanned by the timelike vector field $V$ whose value at $x$ is given by
$$V = \frac{d}{d\beta}\biggl|_{\beta = 0} \Lambda_\beta x = Jx,
\quad \text{where} \quad J = \begin{pmatrix} 0 & -1 \\ -1 & 0 \end{pmatrix}.$$
Clearly, $J^2 = I$ and 
\begin{align*}
g(V, V) = -g(x, x) = -1,\qquad 
X^v = -g(X, V) V, \qquad
X^h = X + g(X, V) V,
\end{align*}
where $X^v$ and $X^h$ denote the vertical and horizontal components of the tangent vector
$$X = \begin{pmatrix} X_1 & \cdots & X_{n+1} \\ Y_1 & \cdots & Y_{n+1} \end{pmatrix} \in T_{x}\mathcal{S}.$$ 
Moreover,
\begin{align*}
& g(JX, JX') = -g(X, X'), \qquad
g(JX, X) 
= 0, \qquad X, X' \in T_{x} \mathcal{S}.
\end{align*}
The horizontal subspace at $x \in \mathcal{S}$ is a $2n$-dimensional subspace with $n$ timelike and $n$ spacelike dimensions given by
$$(T_x\mathcal{S})^h
= \left\{X \in \R^{2(n+1)} \middle| g(X, x) = 0 \text{ and } g(X, V) = 0\right\}.$$
In particular, $J$ leaves the horizontal distribution invariant. 
Since $J\Lambda_\beta = \Lambda_\beta J$, $J$ descends to $\mathcal{S}/\R$.
The two-form $\omega$ defined by
$$\omega(X, X') = g(JX, X'),$$
also descends to $\mathcal{S}/\R$.

Let $\bar{X}, \bar{Y}$ be the horizontal lifts of two vector fields $X, Y$ on $\mathcal{S}/\R$. Then, letting $\bar{\nabla}$ denote the covariant derivative in the ambient space $(\R^{2(n+1)}, ds^2)$, we find (cf. \cite{P2006} p. 86)
$$g\Bigl(\frac{1}{2}[\bar{X}, \bar{Y}], V\Bigr) = g(\nabla_{\bar{X}}^{\mathcal{S}}\bar{Y}, V)
= g(\bar{\nabla}_{\bar{X}} \bar{Y}, V)
= -g(\bar{Y}, \bar{\nabla}_{\bar{X}}V)
= g(\bar{Y}, J \bar{\nabla}_{\bar{X}} JV)
= g(\bar{Y}, J \bar{X}).$$
Thus,
$$\frac{1}{2}[\bar{X}, \bar{Y}]^v = -g(\bar{Y}, J\bar{X}) V.$$
Since the sectional curvature of $\mathcal{S}$ is constant and equal to $1$, O'Neill's formula yields the following finite-dimensional analog of equation (\ref{Kcurvature}):
\begin{align} \nonumber
g(R_{\mathcal{S}/\R}(X, Y)Y, X) 
& = g(R_{\mathcal{S}}(\bar{X}, \bar{Y})\bar{Y}, \bar{X}) + \frac{3}{4} g([\bar{X}, \bar{Y}]^v , [\bar{X}, \bar{Y}]^v)
	\\ \label{finitedimcurv}
& = g(X, X)g(Y,Y) - g(X,Y)^2 -  3\omega(X,Y)^2.
\end{align}

\noindent 
Any subspace of $T(\mathcal{S}/\R)$ spanned by two vectors of the form $X, JX$ has sectional curvature $4$. Indeed, equation (\ref{finitedimcurv}) implies
$$\sec_{\mathcal{S}/\R}(X, JX) =  1 - 3\frac{ g(JX, JX)^2}{g(X,X)g(JX,JX) - g(X, JX)^2} = 4.$$
It follows that $\mathcal{S}/\R$ has constant curvature equal to $4$ when $n = 1$. However, for $n \geq 2$, it is easy to see that the curvature of $\mathcal{S}/\R$ is not constant (consider for example the different subspaces spanned by pairs of basis elements of $(T_x\mathcal{S})^h$ where $x = (x_1, \dots, x_{n+1}, y_{1}, \dots, y_{n+1}) = (1, 0, \dots, 0)$). Proposition 28 on p. 229 of \cite{oneill} then implies that the curvature when $n \geq 2$ takes on arbitrarily large positive as well as arbitrarily large negative values.

\section*{Acknowledgments}
JL acknowledges support from the EPSRC, UK. \\MW acknowledges support from the ETH Foundation.

 \end{document}